\newtheorem{theorem}{Theorem}%[section]
\newtheorem{proposition}[theorem]{Proposition}
\newtheorem{corollary}[theorem]{Corollary}
\newtheorem{lemma}[theorem]{Lemma}
\newtheorem{remark}[theorem]{Remark}
\newtheorem{example}[theorem]{Example}
\DeclareMathOperator{\ii}{{i}}
\DeclareMathOperator{\e}{{e}}
\newcommand{\Comp}{{\mathbb{C} }}
\newcommand{\Real}{{\mathbb{R} }}
\newcommand{\norm}[1]{\left\|#1\right\|}
\newcommand{\cblue}{\color{blue}}
\newcommand{\cblack}{\color{black}}
\begin{document}
\title[Global properties of eigenvalues of  perturbations of matrices.]{Global properties of eigenvalues of parametric rank one perturbations for unstructured and structured matrices. }

\author[A.C.M. Ran]{Andr\'e C.M. Ran}
\address{Afdeling Wiskunde, Faculteit
    der Exacte Wetenschappen, Vrije Universiteit Amsterdam, De Boelelaan
    1111, 1081 HV Amsterdam, The Netherlands
and Research Focus: Pure and Applied Analytics, North West University, South Africa. \\ ORCID: 0000-0001-9868-8605. }%E-mail:
   \email{a.c.m.ran@vu.nl}. 
\author[M. Wojtylak]{Micha\l{} Wojtylak}
\address{Instytut Matematyki, Wydzia\l{} Matematyki i Informatyki, Uniwersytet Jagiello\'nski, 
ul. \L ojasiewicza 6, 30-348 Krak\'ow, Poland.\\
ORCID:  0000-0001-8652-390X}
%\newline
%E-mail: 
\email{michal.wojtylak@gmail.com}
\subjclass{Primary: 15A18, 47A55}

\keywords{Eigenvalue perturbation theory}

\begin{abstract}
General properties of eigenvalues of $A+\tau uv^*$ as functions of $\tau\in\Comp$ or $\tau\in\Real$ or $\tau=\e^{\ii\theta}$ on the unit circle are considered. In particular, the problem of existence of global analytic formulas for eigenvalues is addressed. Furthermore, the limits of eigenvalues with $\tau\to\infty$ are discussed in detail. The following classes of matrices are considered: complex (without additional structure), real (without additional structure), complex $H$-selfadjoint and real $J$-Hamiltonian.  
\end{abstract}

\maketitle

\begin{center}
{\it  Dedicated to  Henk de Snoo on the occasion of his 75th birthday. \\ With admiration and respect.}
\end{center}

\section{Introduction}

The eigenvalues of matrices of the form $A+\tau uv^*$, viewed as a rank one parametric perturbation of the matrix $A$, have been discussed in a vast literature.
We mention the classical works of Lidskii \cite{Lidskii}, Vishik and Lyusternik \cite{VL}, as well as the more general treatment of eigenvalues of perturbations of the matrix in the books by Kato \cite{Kato} and Baumg\"artel \cite{Baumgartel}. Recently, Moro, Burke and Overton returned to the results of Lidskii in a more detailed analysis \cite{Moro-Burke-Overton}, while Karow obtained a detailed analysis of the situation for small values of the parameter \cite{Karow} in terms of structured pseudospectra. 
Obviously, parametric perturbations  appear in many different contexts. The works most closely related to the current one concern rank two perturbations by Kula, Wojtylak and Wysocza\'nski \cite{KWW},  matrix pencils  by De Ter\'an, Dopico and Moro \cite{DeDM08} and  Mehl, Mehrmann and Wojtylak \cite{MMW,MMW2} and matrix polynomials by   by De Ter\'an and Dopico \cite{DeD10}.

While the local behaviour of eigenvalues is fully understood, the global picture still has open ends, cf. e.g. the recent paper by C.K. Li and F. Zhang \cite{LiZhang}. The main problem here is that the  eigenvalues cannot be defined neither analytically nor  uniquely, even if we restrict the parameter $\tau$ to  real numbers. As is well-known the problem does not occur in the case of Hermitian matrices where an analytic function of $\tau$ with Hermitian values has eigenvalues and eigenvectors which can be arranged such that they are analytic as functions of $\tau$ (Rellich's theorem) \cite{Rellich}. 
Other cases where the difficulty is detoured appear, e.g., in a paper by Gingold and Hsieh \cite{GiHs}, where it is assumed that all eigenvalues are real, or in the series of papers of de Snoo (with different coauthors) \cite{DHS1,DHS3,SWW,SWW2} where only one distinguished eigenvalue (the so called \emph{eigenvalue of nonpositive type}) is studied for all real values of $\tau$.

Let us review now our current contribution.
To understand the global properties with respect to the complex parameter $\tau$ we will consider parametric perturbations of two kinds: $A+t uv^*$, where $t\in\Real$, or $A+\e^{\ii \theta} uv^*$, where $\theta\in[0,2\pi)$. The former case was investigated already in our earlier paper \cite{RW}, we review the basic notions in Section~\ref{s:prel}. However, we have not found the latter perturbations in the literature. We study them in Section~\ref{s:ang}, providing elementary results for further analysis.

Joining these two pictures together leads to new results on global behaviour of the eigenvalues in Section \ref{s:global}. Our main interest lies in generic behaviour of the eigenvalues, i.e., we address a question what  happens when a matrix $A$ (possibly very untypical and strange) is fixed and two  vectors $u,v$ are chosen numerically (we intentionally do not use the word `randomly' here).  
One of our main results (Theorem~\ref{th:global}) shows that the eigenvalues of $A+\tau uv^\top$ can be defined globally as analytic functions in this situation for real $\tau$. On the contrary, if one restricts only to real vectors $u,v$ this is no longer possible (Theorem~\ref{real-imposs}). 

In Section~\ref{sec:intermezzo} we study the second main problem of the paper: the limits of eigenvalues for large values of the parameter. Although similar results can be found in the literature we have decided to provide a full description, for all possible (not only generic) vectors $u,v$. This is motivated by our research in the following Section \ref{s:str}, where we apply these results to various classes of structured matrices. We also note there the classes for which a global analytic definition of eigenvalues in not possible (see Theorem~\ref{str-imposs}). 
In Section~\ref{s:non} we apply the general results to the class of matrices with nonnegative entries.

Although we focus on parametric rank one perturbations, we mention here that
the influence of a possibly non-parametric rank one perturbation on the invariant factors of a matrix has a rich history as well, see, e.g., the papers by Thompson \cite{Thompson} and M. Krupnik \cite{Krupnik}. Together with the 
 the works by H\"ormander and Melin \cite{HM}, Dopico and Moro \cite{DM}, Savchenko \cite{Sa1,Sa2} and Mehl, Mehrmann, Ran and Rodman \cite{MMRR1,MMRR2,MMRR3} they constitute a linear algebra basis for our research, developed  in our previous paper \cite{RW}. 
What we add to these methods  is some portion of complex analysis, by using the  function 
$
Q(\lambda)=v^\top(\lambda I_n- A)^{-1}u
$ 
and its holomorphic properties. This idea came to us through multiple contacts and collaborations with Henk de Snoo (cf. in particular the line of papers \cite{HSSW,SWW,SWW2}), for which we express our gratitude here. 

\section{Preliminaries}\label{s:prel}
If $X$ is a complex matrix (in particular, a vector) then by $\bar X$ we define 
the entrywise complex conjugate of $X$, further we set  $X^*=\bar X^\top$. 
We will deal with  rank one perturbations 
$$
B(\tau)=A+\tau uv^*,
$$
with $A\in\Comp^{n\times n}$, $u,v\in\Comp^n$.    
The parameter $\tau$ is a complex variable, we will often write it as $t\e^{\ii \theta}$ and fix either one of $t$ and $\theta$. 
We review now some necessary background and fix the notation. 

Let a matrix $A$ be given. We say that a property (of a triple $A,u,v$) holds for {\em generic vectors $u,v\in\Comp^n$} if there exists a finite set of nonzero complex polynomials of $2n$ variables,  which are zero on all $u,v$ not enjoying the property. Note that the polynomials might depend on the matrix $A$. In some places below a certain property will hold for generic $u,\bar v$. This  happens as in the current paper we consider the perturbations $uv^*$, while in \cite{RW} $uv^\top$ was used (even for complex vector $u,v$) . In any case, i.e., either $u,v$ generic or $u,\bar v$ generic, the closure of the set of `wrong' vectors has an empty interior.

% the results from \cite{RW}.
 By $m_A(\lambda)$ we denote the minimal
polynomial of $A$. 
Define 
\begin{equation}\label{formulap}
p_{uv}(\lambda)= v^* m_A(\lambda)(\lambda I_n -A)^{-1} u 
\end{equation}
and observe that it is a polynomial, due to the formula for the inverse of a Jordan block (cf. \cite{RW}). Let $\lambda_1 , \ldots ,
\lambda_r$ be the (mutually different) eigenvalues of $A$, and corresponding to the eigenvalue
$\lambda_j$, let $n_{j,1} \geq n_{j,2} \geq \cdots \geq n_{j,\kappa_j}$ be
the sizes of the Jordan blocks of $A$.
We shall denote the degree of the polynomial $m_A(\lambda)$ by $l$, so
$$
l=\sum_{j=1}^r n_{j,1}.
$$
Then 
\begin{equation}\label{degpuv}
\deg p_{uv}(\lambda)\leq l-1 
\end{equation}
and equality holds for generic vectors $u,v\in\Comp^n$,  see \cite{RW}.

It can be also easily checked (see \cite{Sa1} or  \cite{RW}) that the characteristic polynomial of $B(\tau)$ satisfies
\begin{eqnarray}\det(\lambda I_n-B(\tau))&=&
\det(\lambda I_n-A)\cdot \left(1-\tau v^*(\lambda I_n-A)^{-1}u\right)\nonumber\\
&=&
\frac{\det(\lambda I_n - A)}{m_A(\lambda)}\left( m_A(\lambda)-\tau p_{uv}(\lambda)\right).\label{nextgreatpoly}
\end{eqnarray}
Therefore,  
the eigenvalues of $A+\tau uv^*$ which are not eigenvalues of $A$, are
roots of the polynomial
\begin{equation}\label{nicepoly}
p_{B(\tau)}(\lambda) = m_A(\lambda)- \tau p_{uv}(\lambda) .
\end{equation}
Note that some eigenvalues of $A$ may be roots of this polynomial as well. 
Saying this differently, we have the following inclusion of spectra of matrices
\begin{equation}\label{sigmapppp}
\sigma(B(\tau))\setminus\sigma(A)\subseteq p_{B(\tau)}^{-1}(0) \subseteq \sigma (B(\tau)),\quad \tau\in\Comp,
\end{equation}
but each of these inclusions may be strict.
Further, let us call an eigenvalue of $A$ {\em frozen $($by $u,v)$} if it is an eigenvalue of $B(\tau)$ for every complex $\tau$. Directly from \eqref{nextgreatpoly} we see that each frozen eigenvalue is either a zero of $\det(\lambda I_n-A)/m_A(\lambda)$, then we call it {\em structurally frozen}, or an eigenvalue of both $m_A(\lambda)$ and $p_{uv}(\lambda)$, and then we call it {\em accidentally frozen}.   
Note that, due to a rank argument, $\lambda_j$ is structurally frozen if and only if it has more than one Jordan block in the Jordan canonical form. Although being structurally frozen obviously does not depend on $u,v$, the Jordan form of $B(\tau)$ at these eigenvalues may vary for different $u,v$, which was a topic of many papers, see, e.g., \cite{HM,Sa1,RW}.

 In contrast, generically  $m_A(\lambda)$ and $p_{uv}(\lambda)$ do not have a common zero \cite{RW}, i.e., a slight change of $u,v$ leads to defrosting of $\lambda_j$ (which explains the name {\em accidentally}). 
In spite of this, we still need to tackle such eigenvalues in the course of the paper. The main technical problem is shown by the following, almost trivial, example.

\begin{example}\rm
Let $A=0\oplus A_1$, where $A_1\in\Comp^{(n-1)\times(n-1)}$ has a single eigenvalue at $\lambda_1\neq 0$ with a possibly nontrivial Jordan structure and let $u=v=e_1$. The eigenvalues of $B(\tau)$ are clearly $\tau$ and $\lambda_1$ and  the eigenvalue $\lambda_1$ is accidentally frozen. Observe that if we define $\lambda_0(\tau)=\tau$ then for $\tau=\lambda_1$ there is a sudden change in the Jordan structure of $B(\tau)$ at $\lambda_0(\tau)$.  
\end{example}

 To handle the evolution of eigenvalues of $B(\tau)$ without getting into the trouble indicated above we introduce the rational function 
\begin{equation}\label{Q2}
Q(\lambda):=v^* (\lambda I_n-A)^{-1} u= \frac{p_{uv}(\lambda)}{m_A(\lambda)}.
\end{equation}
It will play a central role in the analysis. Note that $Q(\lambda)$ is a rational function with poles in the set of eigenvalues of $A$, but not each eigenvalue is necessarily a pole of $Q(\lambda)$.  More precisely, if $\lambda_j$ ($j\in\{1,\dots r\}$) is an accidentally frozen eigenvalue of $A$ then  $Q(\lambda)$ does not have a pole of the same order as the multiplicity of $\lambda_i$ as a root of $m_A(\lambda)$, i.e, in the quotient $Q(\lambda)=\frac{p_{uv}(\lambda)}{m_A(\lambda)}$ there is pole-zero cancellation. 

\begin{proposition}\label{AvsQ}
Let $A\in\Comp^{n\times n}$, let $\tau_0\in\Comp$, let $u,v\in\Comp^n$ and assume that $\lambda_0\in\Comp$ is not an eigenvalue of $A$. Then $\lambda_0$ is an eigenvalue of $A+\tau_0 uv^*$ of algebraic multiplicity $\kappa\in\{1,2,\dots\}$ if and only if
 \begin{equation}\label{Qcond}
 Q(\lambda_0)=\frac{1}{\tau_0},\  Q'(\lambda_0)=0,\dots ,Q^{(\kappa-1)}(\lambda_0)=0,\  Q^{(\kappa)}(\lambda_0)\neq0.   
 \end{equation}
If this happens, then $\lambda_0$ has geometric multiplicity one, i.e., $A+\tau_0 uv^*$ has a Jordan chain of size $\kappa$ at $\lambda_0$. Finally,  $\lambda_0$ is not an eigenvalue of $A+\tau_1 uv^*$ for all $\tau_1\in\Comp\setminus\{\tau_0\}$.
\end{proposition}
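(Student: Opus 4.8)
The plan is to read everything off the characteristic polynomial identity \eqref{nextgreatpoly}, which near $\lambda_0$ reads $\det(\lambda I_n - B(\tau_0)) = \det(\lambda I_n - A)\bigl(1 - \tau_0 Q(\lambda)\bigr)$. Since $\lambda_0 \notin \sigma(A)$, the factor $\det(\lambda I_n - A)$ is analytic and nonvanishing in a neighbourhood of $\lambda_0$, and $Q$ is analytic there by \eqref{Q2} (the possible pole of $Q$ at an eigenvalue of $A$ is irrelevant, as $\lambda_0$ is not one). Hence the algebraic multiplicity of $\lambda_0$ as an eigenvalue of $B(\tau_0)$ — which by definition is its order as a zero of the characteristic polynomial — equals the order of vanishing at $\lambda_0$ of the scalar analytic function $f(\lambda) := 1 - \tau_0 Q(\lambda)$. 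Using $f(\lambda_0) = 1 - \tau_0 Q(\lambda_0)$ and $f^{(k)}(\lambda_0) = -\tau_0 Q^{(k)}(\lambda_0)$ for $k \geq 1$, the statement ``$f$ vanishes to order exactly $\kappa$'' translates verbatim into the chain of conditions \eqref{Qcond}. This yields the asserted equivalence in both directions at once; note that $Q(\lambda_0) = 1/\tau_0$ forces $\tau_0 \neq 0$, consistent with $\lambda_0 \notin \sigma(A) = \sigma(B(0))$.

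For the geometric multiplicity I would solve the eigenvector equation directly. Any eigenvector $x$ satisfies $(A - \lambda_0 I_n)x = -\tau_0 (v^* x)\, u$, and since $\lambda_0 I_n - A$ is invertible this gives $x = \tau_0 (v^* x)(\lambda_0 I_n - A)^{-1} u$. A nonzero $x$ forces $v^* x \neq 0$, so every eigenvector is a scalar multiple of the fixed vector $(\lambda_0 I_n - A)^{-1} u$ (the consistency relation $v^* x = \tau_0 (v^* x) Q(\lambda_0)$ holds automatically since $Q(\lambda_0) = 1/\tau_0$). Thus the eigenspace is at most one-dimensional, and as $\lambda_0$ is an eigenvalue it is exactly one-dimensional, i.e. the geometric multiplicity is one. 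Since the number of Jordan blocks at $\lambda_0$ equals this geometric multiplicity while their total size equals the algebraic multiplicity $\kappa$, there is a single Jordan chain of length $\kappa$.

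Finally, for the uniqueness of $\tau$ I would reuse \eqref{nextgreatpoly}: for any $\tau_1$, $\lambda_0$ is an eigenvalue of $B(\tau_1)$ iff $\det(\lambda_0 I_n - A)\bigl(1 - \tau_1 Q(\lambda_0)\bigr) = 0$, and since the first factor is nonzero this happens iff $\tau_1 = 1/Q(\lambda_0) = \tau_0$, using $Q(\lambda_0) = 1/\tau_0 \neq 0$. Hence $\lambda_0$ is not an eigenvalue of $B(\tau_1)$ for any $\tau_1 \neq \tau_0$. I do not expect a serious obstacle; the one point demanding care is the first paragraph, namely justifying that passing from the characteristic polynomial to the scalar function $f$ preserves the order of the zero — which rests entirely on $\det(\lambda I_n - A)$ being a local analytic unit at $\lambda_0$ — together with keeping track of the degenerate value $\tau_0 = 0$.
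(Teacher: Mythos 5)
Your proof is correct, and for the main equivalence it takes a genuinely different (and shorter) route than the paper. The paper works at the level of the polynomials $m_A$ and $p_{uv}$: it differentiates the identity $p_{uv}(\lambda)-Q(\lambda)m_A(\lambda)\equiv 0$ via the Leibniz rule, obtaining \eqref{Leibniz}, and then runs an induction on the order of the derivative to pass between the conditions $m_A^{(j)}(\lambda_0)-\tau_0 p_{uv}^{(j)}(\lambda_0)=0$ (which encode the algebraic multiplicity through \eqref{nicepoly}) and the conditions $Q^{(j)}(\lambda_0)=0$. You instead factor the characteristic polynomial via \eqref{nextgreatpoly} as $\det(\lambda I_n-A)\cdot\bigl(1-\tau_0 Q(\lambda)\bigr)$ and observe that the first factor is a local analytic unit at $\lambda_0$, so the order of the zero of the characteristic polynomial equals the order of the zero of $f=1-\tau_0 Q$; since $f^{(k)}=-\tau_0 Q^{(k)}$ for $k\geq 1$ and $\tau_0\neq 0$, this translates verbatim into \eqref{Qcond} and yields both implications simultaneously, with no induction. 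What the paper's longer computation buys is that it stays entirely within polynomial arithmetic; interestingly, the paper itself remarks right after its proof that the statement ``can also be seen by viewing $1-\tau Q(\lambda)$ as a realization of the scalar rational function'' --- your argument is essentially that remark carried out in detail, together with the careful justification (the unit factor) that makes it rigorous. Your treatment of the degenerate value $\tau_0=0$ is a point the paper glosses over and you handle correctly. The geometric-multiplicity part (every eigenvector is a multiple of $(\lambda_0 I_n-A)^{-1}u$, i.e.\ a rank-one perturbation of the invertible matrix $\lambda_0 I_n-A$ has one-dimensional kernel) and the uniqueness of $\tau_0$ coincide in substance with the paper's reasoning, with yours spelled out slightly more explicitly.
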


\begin{remark}\label{AvsQR}\rm
If $\kappa=1$ condition \eqref{Qcond} should be read as $Q(\lambda_0)=1/\tau_0$, $Q'(\lambda_0)\neq 0$. In this case the implicit function theorem tells us that the eigenvalues can be defined analytically in the neigbourhood of $\tau_0,\lambda_0$.
If $\kappa>1$ then the analytic definition is not possible and the eigenvalues expand as Puiseux series,
that is, they behave locally as the solutions of $(\lambda-\lambda_0)^\kappa =\tau-\tau_0$, see, e.g.,
\cite{Baumgartel,Kato,Knopp}.
\end{remark}

\begin{remark}\label{ex:para1}\rm
One may be also tempted to define the eigenvalues via solving the equation $Q(\lambda)=1/\tau$ at $\lambda_0$ being an accidentally frozen eigenvalue of $A$ for which $Q(\lambda)$ does not have a pole at $\lambda_0$.  This would be, however, a dangerous procedure, as $\lambda_0$ might get  involved in a larger Jordan chain.  For example let
$$
B(\tau)=\begin{bmatrix} 1 & 1  \\ 0 & \tau \end{bmatrix}
$$
with an accidentally frozen eigenvalue 1 and $Q(\lambda)=1/\lambda$. Here for $\tau=1$ we get a Jordan block of size 2, but clearly the eigenvalues in a neighbourhood of $\lambda_0=1$ and $\tau_0=1$ do not behave as $1$ plus the square roots of $\tau- 1$. For this reason we will avoid the accidentally frozen eigenvalues.
\end{remark} 

\begin{remark}\rm
Note that in case $m_A$ and $p_{uv}$ have no common zeroes, i.e., there are no accidentally frozen eigenvalues, $Q^\prime(\lambda)$ can be expressed in terms of $m_A$ and $p_{uv}$ as follows
\begin{equation}\label{Qprime}
Q^\prime(\lambda)= \frac{p_{uv}^\prime(\lambda)m_A(\lambda)-p_{uv}(\lambda)m_A^\prime(\lambda)}{m_A(\lambda)^2},
\end{equation}
where cancellation of roots between numerator and denominator occurs in an eigenvalue of $A$ when corresponding to that eigenvalue there is a Jordan block of size bigger than one.
\end{remark}

\begin{proof}[Proof of Proposition \ref{AvsQ}]
For the proof of  
the first statement we start from the definition of $Q(\lambda)$. Note that $m_A(\lambda_0)$
is necessarily non zero, and so $p_{uv}(\lambda_0)$ is non-zero as well. If $\lambda_0$ is an eigenvalue of $B(\tau_0)$ which is not an eigenvalue of $A$, then, since $p_{B(\tau_0)}(\lambda_0)=0$, we have from \eqref{Q2} that $Q(\lambda_0)=\frac{1}{\tau_0}$, which proves the first equation in \eqref{Qcond}.

Furthermore,
from the definition of $Q(\lambda)$ we have $p_{uv}(\lambda)-Q(\lambda)m_A(\lambda)$ is identically zero. So, for any $\nu $ also the $\nu $-th derivative is zero. By the Leibniz rule this gives
$$
p_{uv}^{(\nu )}(\lambda)- \sum_{j=0}^\nu  \begin{pmatrix} \nu \\ j\end{pmatrix} Q^{(j)}(\lambda)m_A^{(\nu -j)}(\lambda) =0.
$$
We rewrite this slightly as follows:
\begin{equation}\label{Leibniz}
p_{uv}^{(\nu )}(\lambda) -Q(\lambda)m_A^{(\nu )}(\lambda) =\sum_{j=1}^\nu  \begin{pmatrix} \nu \\ j\end{pmatrix} Q^{(j)}(\lambda)m_A^{(\nu -j)}(\lambda) .
\end{equation}

Now, if $\lambda_0$ is an eigenvalue of algebraic multiplicity $\kappa$ of $B(\tau_0)$ and not an eigenvalue of $A$, then for $j=0, 1, \ldots , \kappa -1$ we have $m^{(j)}_A(\lambda_0)-\tau_0 p_{uv}^{(j)}(\lambda_0)=0$. Take $\nu =1$ in \eqref{Leibniz}, and set $\lambda=\lambda_0$:
$$
p_{uv}^\prime (\lambda_0)-\frac{1}{\tau_0}m_A(\lambda_0)=Q^\prime(\lambda_0)m_A(\lambda_0).
$$
Since $m_A(\lambda_0)\not=0$ it now follows that $Q^\prime(\lambda_0)\not=0$ when $\kappa=1$, while $Q^\prime (\lambda_0)=0$ when $\kappa >1$. Now proceed by induction. Suppose we have already shown that $Q^{(i)}(\lambda_0)=0$ for $i=1, \ldots , k <\kappa-1$.  Then set $\nu =k+1$ in \eqref{Leibniz} to obtain, using the induction hypothesis, that
$$
0=p_{uv}^{(k+1)}(\lambda_0) -Q(\lambda_0)m_A^{(k+1)}(\lambda_0)
= Q^{(k+1)}(\lambda_0)m_A(\lambda_0).
$$
Once again using the fact that $m_A(\lambda_0)\not=0$, we have that $Q^{(k+1)}(\lambda_0)=0$.
Finally, for $\nu =\kappa$ in \eqref{Leibniz}, and using what we have shown so far in this paragraph, we have
$$
0\not=p_{uv}^{(\kappa)}(\lambda_0) -Q(\lambda_0)m_A^{(\kappa)}(\lambda_0)=Q^{(\kappa)}(\lambda_0)m_A(\lambda_0),
$$
and so \eqref{Qcond} holds. 

Conversly, suppose \eqref{Qcond} holds. Then by the definition \eqref{Q2} of $Q$ we have $m_A(\lambda_0)-\tau_0p_{uv}(\lambda_0)=0$, so by \eqref{nicepoly} $\lambda_0$ is an eigenvalue of $B(\tau_0)$. Moreover, by \eqref{Leibniz} we have $m_A^{(j)} (\lambda_0)-\tau_0 p_{uv}^{(j)}(\lambda_0)=0$ for $j=1, \ldots, \kappa_1$, while $m_A^{(\kappa)} (\lambda_0)-\tau_0 p_{uv}^{(\kappa)}(\lambda_0)=\tau_0Q^{(\kappa)}(\lambda_0)m_a(\lambda_0)\not=0$. Hence, $\lambda_0$ is an eigenvalue of $B(\tau_0)$ of algebraic multiplicity $\kappa$,
completing the proof of the first statement.

For the proof of the second statement, note that
as $\lambda_0 I_n-A$ is invertible, any rank one perturbation of $\lambda_0 I_n-A$ can have only a one dimensional kernel.
Therefore, the Jordan structure of the perturbation at $\lambda_0$ is fixed. The last statement for $\tau_1=0$ follows from the assumption that $\lambda_0\notin\sigma(A)$ and for $\tau_1\notin\{0,\tau_0\}$   directly from \eqref{Qcond}.
\end{proof}

The statements in Proposition \ref{AvsQ} can also be seen by viewing $1-\tau Q(\lambda)=1-\tau v^*(\lambda I_n-A)^{-1} u$ as a \emph{realization} of the (scalar) rational function $1-\tau Q(\lambda)$. From that point of view the connection between poles of the function and eigenvalues of $A$, respectively, zeroes of the function and eigenvalues of $B(\tau)=A+\tau uv^*$ is well-known. For an in-depth analysis of this connection, even for matrix-valued rational matrix functions, see \cite{BGKR}, Chapter 8. We provided above an  elementary proof of the scalar case for the reader's convenience. 

 Note the following example, now more involved than the one in Remark~\ref{ex:para1}.
 
\begin{example}\label{ex:para}\rm
In this example we return to the consideration of accidentally frozen eigenvalues.
Let
$A=\begin{bmatrix} 1 & 1 & 0 \\ 0 & 1 & 0 \\ 0 & 0 & 2\end{bmatrix}, u=v=e_1$. 
Then we have:
\begin{align*}
m_A(\lambda)&= (\lambda-1)^2(\lambda -2)=\lambda^3-4\lambda^2+5\lambda -2,\\
Q(\lambda)&=\frac{1}{\lambda-1}, \quad Q^\prime (\lambda)=-\frac{1}{(\lambda-1)^2},\\
p_{uv}(\lambda)&=(\lambda-1)(\lambda -2)=\lambda^2-3\lambda+2.
\end{align*}
Also $B(\tau)=A+\tau uv^*=\begin{bmatrix} \tau+1 & 1 & 0 \\ 0 & 1 & 0 \\ 0 & 0 & 2\end{bmatrix}$, which has eigenvalues $1, 2$ and $\tau+1$. 
Note that both $1$ and $2$ are, by definition, accidentally frozen eigenvalues, although their character is a rather different.

Let us consider Proposition \ref{AvsQ} for this example.  
Note that $Q^\prime(\lambda)$ has no zeroes, which tells us that there are no double eigenvalues of $B(\tau)$ which are not eigenvalues of $A$. However, note that 
the zeros of $m_A(\lambda)$ and $p_{uv}(\lambda)$ are not disjoined. In particular,
$$
p_{uv}^\prime(\lambda)m_A(\lambda)-p_{uv}(\lambda)m_A^\prime (\lambda) =(\lambda-1)^2(\lambda-2)^2,
$$
which detects the double eigenvalue of $B(0)$ at $\lambda_1=1$ and a double semisimple  eigenvalue of $B(1)$ at $\lambda_2=2$, however, as can be seen from \eqref{Qprime} the roots of this polynomial are cancelled by the roots of $m_A^2(\lambda)$.
\end{example}

\section{Angular parameter}\label{s:ang}
In this section we will study the perturbations of the form
$$
A+t \e^{\ii \theta}uv^*,\quad \theta\in[0,2\pi),
$$
where $t>0$ is a parameter. More precisely, we will be interested in the evolution of the sets
$$
\sigma(A,u,v;t)=\bigcup_{0\leq \theta <2\pi} \sigma(A+t \e^{\ii \theta}uv^*)
$$
with the parameter $t>0$.

It should be noted that the sets $\sigma(A,u,v;t)$ are strongly related to the pseudospectral sets as introduced in e.g., \cite{Karow}, Definition 2.1. In fact they can be viewed as the boundaries of pseudospectral sets for the special case of rank one perturbations. The interest in \cite{Karow}, see in particular the beautiful result in Theorem 4.1 there, is in the small $t$ asymptotics of these sets. Our interest below is hence more in the intermediate values of $t$ and in the large $t$ asymptotics of these sets. 

By $z_1,\dots, z_{d}$ we denote the (mutually different)  zeroes of $Q'(\lambda)$, note that some of them might happen to be accidentally frozen eigenvalues, a slight modification of Example~\ref{ex:para} is left to the reader, see also Remark~\ref{rem:new} below.
We define $t_j$ as
$$
t_j =\frac{1}{|Q(z_j)|},\   \quad j=1, \ldots ,d.
$$

We group some properties of the sets $\sigma(A,u,v;t)$ in one theorem. Below by a {\em smooth closed curve} we mean a $\mathcal{C}^\infty$--diffeomorphic image of a circle.  
\begin{theorem}\label{prop:angular} Let $A\in\Comp^{n\times n}$ and let $u,v\in\Comp^n$ be two nonzero vectors, then the following holds.
\begin{itemize}
\item[(i)]\label{i1}
For $t>0$, $t\neq t_j$ $(j=1,\dots, d)$ the set $\sigma(A,u,v;t)$ consists of a union of smooth closed algebraic curves that do not intersect mutually.  
\item[(ii)]\label{i2} For $t= t_j$ $(j=1,\dots, d)$ the set $\sigma(A,u,v;t)$ is locally diffeomorphic with the interval, except the intersection points at those  $z_i$ for which  $t_j=1/|Q(z_i)|$ (possibly there are several such $z_i$'s).
\item[(iii)] For generic $u,v\in\Comp^n$ and for all $j=1,\dots d$ the point $z_j$ is a double eigenvalue of  $A+\tau uv^*$, for $\tau=1/Q(z_j)$. Two of the curves $\sigma(A,u,v,t)$ meet for $t=t_j$ at the point $z_j$. These curves are at the point $z_j$ not differentiable, they make a right angle corner, and meet at right angles as well.   
\item[(iv)]\label{union}
\begin{equation}\label{sumt} 
\sigma(A)\cup\bigcup_{t>0} \sigma(A,u,v;t) \cup Q^{-1}(0)=\Comp.
\end{equation}
\item[(v)]\label{i3} The function $t\to \sigma(A,u,v;t)$ is continuous in the Hausdorff metric for $t>0$.
\item[(vi)] $ \sigma(A,u,v;t)$ converges to $Q^{-1}(0)\cup\{\infty\}$ with $t\to\infty$.
\end{itemize}
\end{theorem}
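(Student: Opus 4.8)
The plan is to translate everything into the geometry of the level sets of $|Q|$ and then read off the six items from the behaviour of $Q$ as a proper meromorphic map $Q\colon\widehat{\Comp}\to\widehat{\Comp}$. First I would record the dictionary: for $\lambda\notin\sigma(A)$, Proposition~\ref{AvsQ} says $\lambda\in\sigma(A+t\e^{\ii\theta}uv^*)$ for some $\theta$ precisely when $Q(\lambda)=\e^{-\ii\theta}/t$ is solvable in $\theta$, i.e. when $|Q(\lambda)|=1/t$ (with $\theta=-\arg Q(\lambda)$). Combining this with \eqref{nextgreatpoly}, which shows that an eigenvalue of $A$ stays in the spectrum for $t>0$ exactly when it is frozen (a non-frozen eigenvalue with a single Jordan block is a pole of $Q$ and leaves the spectrum for all $\tau\neq0$), gives the working description
\[
\sigma(A,u,v;t)=\{\lambda\notin\sigma(A):|Q(\lambda)|=1/t\}\ \cup\ F,
\]
where $F$ is the finite, $t$-independent set of frozen eigenvalues. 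Thus, up to $F$, the set $\sigma(A,u,v;t)$ is the preimage $Q^{-1}(C_{1/t})$ of the circle $C_r=\{|w|=r\}$ of radius $r=1/t$; here $Q$ is proper because $\deg p_{uv}<\deg m_A$ forces $Q(\infty)=0$, so these level sets are bounded in $\Comp$.

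Items (i)–(ii) then follow from the regular-value/preimage theorem. The finite critical values of $Q$ in $\Comp\setminus\{0\}$ are exactly the numbers $Q(z_j)$, and $|Q(z_j)|=1/t_j$ by definition of $t_j$. For $t\neq t_j$ the circle $C_{1/t}$ avoids all critical values, so $Q$ is a local biholomorphism along $Q^{-1}(C_{1/t})$; the preimage of the embedded circle under a proper submersion is a compact embedded $1$-manifold, hence a finite disjoint union of smooth closed curves (disjointness is exactly the ``do not intersect mutually'' assertion), and these are algebraic because the defining equation $|p_{uv}(\lambda)|^2=t^{-2}|m_A(\lambda)|^2$ is polynomial in $\operatorname{Re}\lambda$ and $\operatorname{Im}\lambda$. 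For $t=t_j$ the circle passes through the critical value $Q(z_i)$ for each $i$ with $1/|Q(z_i)|=t_j$; away from those $z_i$ the map is still a local diffeomorphism, so the set is locally an arc, which is (ii).

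The core is (iii). For generic $u,v$ I would invoke the standard codimension-one conditions (as in the preliminaries and \cite{RW}) that the zeros of $Q'$ are simple, that $Q$ separates them (distinct critical values, hence distinct moduli $t_j$), and that none is accidentally frozen, so $Q(z_j)\neq0,\infty$. Simplicity means $Q''(z_j)\neq0$, whence Proposition~\ref{AvsQ} with $\kappa=2$ gives that $z_j$ is a double eigenvalue at $\tau=1/Q(z_j)$, with $|1/Q(z_j)|=t_j$. For the local shape I would pass to the harmonic function $g=\log|Q|=\operatorname{Re}\log Q$, which has a nondegenerate critical point at $z_j$ since $(\log Q)''(z_j)=Q''(z_j)/Q(z_j)\neq0$; writing $\zeta=\lambda-z_j$ one finds $g(\lambda)-g(z_j)=\operatorname{Re}(b\zeta^2)+o(|\zeta|^2)$ with $b\neq0$, a nondegenerate saddle. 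Its critical level set is, to leading order, the two perpendicular lines $\operatorname{Re}(b\zeta^2)=0$, giving two branches crossing orthogonally at $z_j$ — the ``meet at right angles'' claim, with exactly two branches because the critical value is simple. The ``right-angle corner / not differentiable'' statement is then about the connected components: tracking the two disjoint smooth closed curves present for $t$ near $t_j$ through the Morse reconnection of the saddle (model $x^2-y^2=\eps$), each closed curve degenerates as $t\to t_j$ to one adjacent pair of the four perpendicular rays, i.e. to a right-angle corner at $z_j$.

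Items (iv)–(vi) are shorter. For (iv) I classify an arbitrary $\lambda\in\Comp$: either $\lambda\in\sigma(A)$, or $Q(\lambda)$ is finite and equals $0$ (so $\lambda\in Q^{-1}(0)$) or is nonzero (so $\lambda\in\sigma(A,u,v;1/|Q(\lambda)|)$). For (v) I would use that $Q$ is continuous and open: upper semicontinuity is immediate from $|Q(\lambda)|=\lim 1/t_n$, and openness provides, near any point of the level set, points realizing every nearby modulus $1/t'$, giving lower semicontinuity; the fixed set $F$ does not affect continuity. For (vi) I let $r=1/t\to0$, so $C_r\to\{0\}$ on the sphere and, by continuity of preimages under the proper map $Q$, the curves $Q^{-1}(C_{1/t})$ shrink onto $Q^{-1}(\{0\})=Q^{-1}(0)\cup\{\infty\}$ (recall $Q(\infty)=0$). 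The main obstacle, and the point demanding the most care, is precisely the bookkeeping of $\sigma(A)$: poles of $Q$ must be excised, and the frozen set $F$ persists for every $t$, so the clean convergence in (vi) to $Q^{-1}(0)\cup\{\infty\}$ is really a statement about the moving curves (for the full set one either assumes no eigenvalue of $A$ is frozen or adjoins $F$ to the limit). This, together with making the Morse reconnection in (iii) rigorous, is where the real work lies.
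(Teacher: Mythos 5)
Your proposal is correct, and while it starts from the same dictionary as the paper --- away from $\sigma(A)$, the set $\sigma(A,u,v;t)$ is the level set $\{\lambda:|Q(\lambda)|=1/t\}$, rewritten as the real-algebraic equation $|m_A(\lambda)|^2=t^2|p_{uv}(\lambda)|^2$, which settles (i), (ii) and (iv) in essentially the paper's way --- it replaces several of the paper's citations by self-contained arguments. For (iii) the paper quotes Theorem 5.1 of \cite{RW} to get that generically all eigenvalues off $\sigma(A)$ have multiplicity at most two, and then refers to the local perturbation theory of Lidskii \cite{Lidskii} and Karow \cite{Karow} for the right-angle picture; you instead derive the local geometry directly from a Morse analysis of the harmonic function $\log|Q|$ at the nondegenerate saddle $z_j$, with the reconnection model $x^2-y^2=\eps$, which is more elementary and makes the orthogonality transparent --- though note you still have to borrow the genericity inputs (simple zeros of $Q'$, no accidental freezing, $Q(z_j)\neq 0$) from \cite{RW}, exactly as the paper does, so no rigor is gained there. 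For (v) the paper simply invokes Proposition 2.3(c) of \cite{Karow}, whereas you prove Hausdorff continuity from openness of the non-constant holomorphic map $Q$ (plus a compactness argument for upper semicontinuity, which you should spell out: level sets stay in a fixed compact set away from the poles for $t$ in compact subsets of $(0,\infty)$); for (vi) the paper argues via the maximum principle ($1/|Q|$ has no local extrema off $p_{uv}^{-1}(0)$ and tends to $\infty$), while you use properness of $Q$ on the Riemann sphere together with $Q(\infty)=0$, which follows from $\deg p_{uv}<\deg m_A$ --- both are sound. Finally, your explicit bookkeeping of the frozen set $F$, including the observation that a non-frozen eigenvalue of $A$ is not an eigenvalue of $B(\tau)$ for any $\tau\neq 0$ (so the moving part of $\sigma(A,u,v;t)$ is exactly the level set) and that (vi) is literally a statement about the moving curves unless $F$ is adjoined, is more careful than the paper, whose proof states the level-set identity without this caveat; the only degenerate case neither you nor the paper addresses is $p_{uv}\equiv 0$ (possible even for nonzero $u,v$), where $Q\equiv 0$ and the level-set description collapses.
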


\begin{proof}
Statements (i) and (ii) become clear if one observes that 
  $$
  \sigma(A,u,v;t)=\left\{ z\in\Comp: \frac1{|Q(z)|}=t\right\},\quad t>0,
  $$
i.e., it is a level sets of the modulus of a rational
function $1/Q(z)$. Since these level sets can also be written as the set of all points $z\in\mathbb{C}$ for which $|m_A(z)|^2=t^2|p_{uv}(z)|^2$ it is clear that for each $t$ they are algebraic curves. For $t\not= t_j$ ($j=1, \ldots , d$) the curves have no self-intersection and hence are smooth.

Let us now prove (iii). First note that for generic $u,v\in\Comp^n$ there are no 
accidentally frozen eigenvalues, as remarked in the end of Section~\ref{s:prel}.
Hence, every eigenvalue of $A+\tau uv^*$ of multiplicity $\kappa$, which is not an eigenvalue of $A$, is necessarily a zero of $Q(\lambda)-1/\tau$ of multiplicity $\kappa$, see Proposition~\ref{AvsQ}. However, by Theorem 5.1 of \cite{RW} for generic $u,v\in\Comp^n$ all eigenvalues of $A+\tau uv^*$ which are not eigenvalues of $A$ are of multiplicity at most two, and by Proposition \ref{AvsQ} the geometric multiplicity is one. Therefore the meeting points are at $z_j$ with $Q'(z_j)=0$, $Q''(z_j)\neq 0$. The behaviour of the eigenvalue curves concerning right angle corners follows from the local theory on the pertubation of an eigenvalue of geometric and algebaic multiplicity two for small values of $t-t_j$ (see e.g., the results of \cite{Lidskii}, but in particular, because of the connection with pseudospectral see \cite{Karow}).

To see (iv) let $\lambda_0\in\Comp$ be neither an eigenvalue of $A$ nor a zero of  $Q(\lambda)$. Then $Q(\lambda_0)=1/\tau_0$ for some $\tau_0\in\Comp$, hence $\lambda_0\in \sigma(A,u,v;|\tau_0|)$.
Statement (v) follows from Proposition 2.3 part (c) in \cite{Karow}. To see (vi) note that $1/|Q(\lambda)|$, as an absolute value of a holomorphic function,  does not have any local extreme points on $\Comp\setminus p_{uv}^{-1}(0)$ and it converges to infinity with $|\lambda|\to\infty$. 
\end{proof}

In Section \ref{sec:intermezzo} we will study in detail the rate of convergence in point (v) above. 

\begin{example}\label{ex1}\rm
 Consider the matrix
$$A=\begin{bmatrix}
    -2   &  0   &  0 &    0\\
     0   &  0   &  0 &    0\\
     0   &  0   &  4  &   1\\
     0   &  0   &  0  &   4
\end{bmatrix}$$ and the vectors
$$
u =\begin{bmatrix}
  -0.2 + 0.7\ii \\
   1.5 - 1.2\ii \\
   1.5 + 0.5\ii \\
   1.5 + 1.5\ii 
\end{bmatrix}
\mbox{\ and\ }  
v =\begin{bmatrix}
   0.5 + 0.3\ii\\
   1 - 0.8\ii\\
   0.8 + 0.9\ii\\
  -0.3 - 1.2\ii
\end{bmatrix}.
$$
In Figure \ref{f1} one may find the graph of the corresponding function $|1/Q(\lambda)|$, and a couple of curves $\sigma(A,u,v,t)$ at values of $t$ where double eigenvalues occur. Observe that these curves are often called level curves or contour plots of the function $|1/Q(\lambda)|$.
\begin{figure}
\begin{center}
\includegraphics[height=6cm]{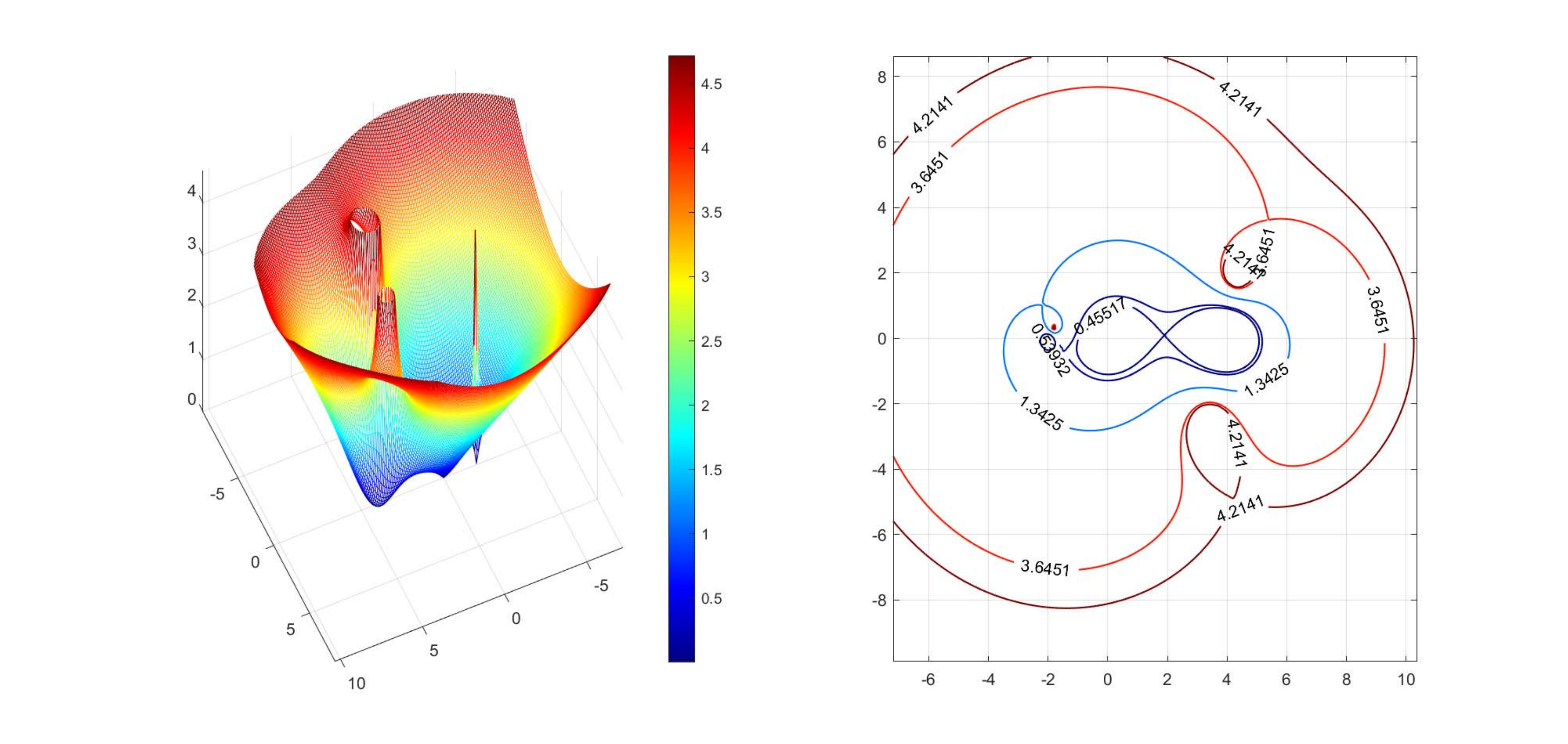}
\end{center}
\caption{The plot of $|1/Q(\lambda)|$ and the curves $\sigma(A,u,v,t)$ for the values of $t$ for which there is a double eigenvalue.}\label{f1}
\end{figure}
\end{example}

\begin{remark}\label{rem:new}\rm 
Observe that one may easily construct examples with $z_1,\dots,z_d$ given. 
 Let
$$
A=\begin{bmatrix} 0 & 1 & 0 & \cdots & 0 \\ \vdots& \ddots & \ddots & \ddots & \vdots \\ \vdots & & \ddots & \ddots & 0\\ 0 & \cdots & \cdots & 0 & 1\\
a_1 & a_2 & \cdots & \cdots & a_n
\end{bmatrix},
\quad
u=\begin{bmatrix} 0 \\ \vdots \\ \vdots \\ 0 \\ 1\end{bmatrix}, \quad
v=\begin{bmatrix} \overline{a}_1\\ \overline{a}_2\\ \vdots \\ \vdots \\ \overline{a}_n\end{bmatrix}
$$
with $a_1,\dots a_n\in\Comp\setminus\{0\}$.
Then for $\tau=1$ the matrix $B(\tau)=A+\tau uv^*$ is equal to the $n\times n$ Jordan block with eigenvalue zero, and hence has an eigenvalue of multiplicity $n$.  By a  construction similar to Example~\ref{ex:para} we may also make this eigenvalue accidentally frozen.
\end{remark}

\begin{example}\rm
In a concrete example, let 
$$
A=\begin{bmatrix} 0 & 1 & 0 \\ 0 & 0 & 1 \\ 1 & -1 & 1\end{bmatrix}, \quad u=\begin{bmatrix} 0 \\ 0 \\ 1 \end{bmatrix}, \quad v=\begin{bmatrix} 1 \\ -1 \\ 1\end{bmatrix}.
$$
Then $m_A(\lambda)=(\lambda -1)(\lambda^2+1)$ so the eigenvalues of $A$ are $1, \pm \ii$. Further $p_{uv}(\lambda)=\lambda^2-\lambda+1$ with roots at $\frac{1}{2}\pm \frac{\sqrt{3}}{2}\ii$, and the zeroes of $Q^\prime(\lambda)$ are the zeroes of $p_{uv}^\prime (\lambda)m_A(\lambda)-p_{uv}(\lambda)m_A^\prime(\lambda)=-\lambda^2(\lambda^2-2\lambda+3)$ which has roots at $0$ and at $1\pm\sqrt{2}\ii$. The corresponding values of $t$ are, respectively, $\frac{1}{|Q(0)|}=1$ and $\frac{1}{|Q(1\pm\sqrt{2}\ii)|}=\frac{4}{\sqrt{3}}$.
The eigenvalues of $A+\tau uv^*$ are plotted for the values $\tau=te^{\ii\theta}$ with $t=1$ and $t=\frac{4}{\sqrt{3}}$ in the graph below.
\begin{figure}
\includegraphics[height=6cm]{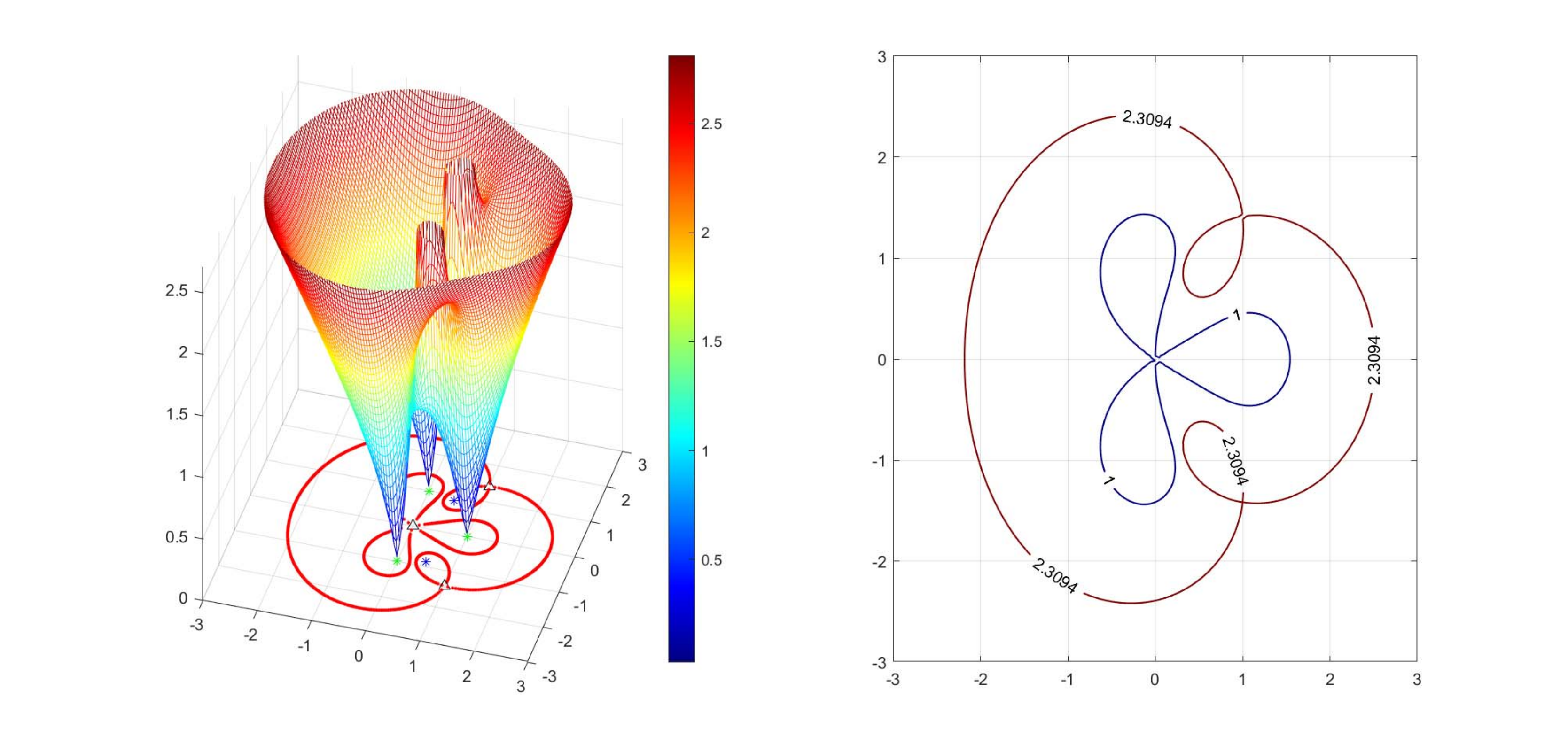}%{threefold2}
\caption{Eigenvalue curves (right) showing a triple eigenvalue at zero for $\tau=1$ and double eigenvalues at $1\pm\sqrt{2}i$ for $\tau=\frac{4}{\sqrt{3}}$. 
On the left the graph of $1/|Q(\lambda)|$ with the same eigenvalue curves plotted in the ground plane. Green stars indicate the eigenvalues of $A$, blue stars the roots of $p_{uv}(\lambda)$ and triangles the zeroes of $Q^\prime(\lambda)$}\label{f2}
\end{figure}
\end{example}

\section{ Eigenvalues as global functions of the parameter}\label{s:global}

We return now to the problem of defining the eigenvalues as functions of the parameter. Recall that $l$ stands for the degree of the minimal polynomial of $A$. We start with the case where we consider the parameter $\tau$ to be real.

\begin{theorem}\label{th:global}
Let $A\in\Comp^{n\times n}$ and $u\in\Comp^n\setminus\{0\}$ be fixed.  Then for all $v\in\Comp^n$ except some closed set with empty interior the following holds.
\begin{itemize}
\item[(i)] The eigenvalues of
$$
B(\tau)=A+\tau uv^*,\quad \tau\in(0,+\infty),
$$
which are not eigenvalues of $A$, can be defined uniquely (up to ordering) as  functions $\lambda_1(\tau),\dots,\lambda_l(\tau)$ of the parameter $\tau\in(0,+\infty)$.
\item[(ii)] The remaining part of the spectrum of $B(\tau)$ consists of structurally frozen eigenvalues of $A$, i.e., there are no accidentally frozen eigenvalues (see formula \eqref{sigmapppp} and the paragraphs following it for definitions).
\item[(ii)] For $i,j=1,\dots d$, $i\neq j$ one has $\lambda_i(\tau)\neq \lambda_j(\tilde\tau)$ for all $\tau,\tilde\tau\in(0,+\infty)$. 
\item[(iii)] The functions $\lambda_j(t)$  can be extended to analytic functions in some open complex neighbourhood of $(0,+\infty)$.  
\end{itemize}
\end{theorem}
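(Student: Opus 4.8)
The plan is to reduce everything to the rational function $Q$, its critical points, and Proposition~\ref{AvsQ}. Fix $u$. Using the genericity results of \cite{RW} recalled in Section~\ref{s:prel}, I first discard a closed set with empty interior so that for the remaining $v$ there are no accidentally frozen eigenvalues, $\deg p_{uv}=l-1$ (so $p_{B(\tau)}=m_A-\tau p_{uv}$ is monic of degree $l$ for every $\tau$), and every eigenvalue of $B(\tau)$ off $\sigma(A)$ has algebraic multiplicity at most two, a double one occurring exactly at a zero $z_j$ of $Q'$ with $\tau=1/Q(z_j)$ (Theorem~5.1 of \cite{RW} and Proposition~\ref{AvsQ}). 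This already yields the statement about the remaining spectrum: a non-structurally-frozen eigenvalue $\lambda_i$ of $A$ has $p_{uv}(\lambda_i)\neq0$ (no accidental freezing), so by \eqref{nextgreatpoly} it leaves $\sigma(B(\tau))$ for every $\tau\neq0$, while the structurally frozen ones stay for all $\tau$; hence $\sigma(B(\tau))\cap\sigma(A)$ consists exactly of the structurally frozen eigenvalues.

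The crucial extra requirement is that no double eigenvalue occurs for real positive $\tau$, i.e. $Q(z_j)\notin(0,+\infty)$ for all $j$. I would establish that this removes only a nowhere dense set of $v$ by a rotation trick: replacing $v$ by $\e^{\ii\alpha}v$ leaves $A$ and $u$ fixed, multiplies $Q$ by $\e^{-\ii\alpha}$, hence fixes every critical point $z_j$ and sends $Q(z_j)$ to $\e^{-\ii\alpha}Q(z_j)$; consequently $1/Q(z_j)$ can land on $(0,+\infty)$ only for finitely many $\alpha\in[0,2\pi)$. Therefore no open ball can lie inside the bad set, so the bad set has empty interior; since it is semialgebraic in $(\operatorname{Re}v,\operatorname{Im}v)$ (it is the projection of $\{(v,\tau):\tau>0,\ \operatorname{disc}_\lambda p_{B(\tau)}(\lambda)=0\}$, semialgebraic by Tarski--Seidenberg, the coefficients of $p_{B(\tau)}$ being polynomial in $\tau,\bar v$), its closure also has empty interior. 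Adjoining this closure to the previous exceptional set keeps the latter closed with empty interior.

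For $v$ outside this set, the $\lambda$-discriminant of $p_{B(\tau)}$ is nonzero for every $\tau\in(0,+\infty)$: a multiple root off $\sigma(A)$ would force $\tau=1/Q(z_j)\notin(0,+\infty)$, while a multiple root at some $\lambda_i\in\sigma(A)$ would force $\tau p_{uv}(\lambda_i)=0$, impossible for $\tau\neq0$ without accidental freezing. Thus for each such $\tau$ the monic degree-$l$ polynomial $p_{B(\tau)}$ has $l$ distinct simple roots, all off $\sigma(A)$, which by \eqref{sigmapppp} are precisely the eigenvalues of $B(\tau)$ outside $\sigma(A)$. To organise them into functions and obtain the analytic extension simultaneously, note that the discriminant, as a polynomial in $\tau$, can vanish only at the finitely many points $1/Q(z_j)$ and possibly at $0$; none of the former is real positive, so for small $\eps$ the simply connected strip $U=\{\operatorname{Re}\tau>0,\ |\operatorname{Im}\tau|<\eps\}$ (which excludes $0$) contains no discriminant zero. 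Over $U$ the standard continuation of roots of a polynomial with holomorphic coefficients and nonvanishing discriminant produces $l$ single-valued holomorphic functions $\lambda_1,\dots,\lambda_l$; their restrictions to $(0,+\infty)$ are the functions of the unique-definition statement, and $U$ furnishes the required complex-analytic extension.

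The non-crossing statement follows at once from the last assertion of Proposition~\ref{AvsQ}: if $\lambda_i(\tau)=\lambda_j(\tilde\tau)=:\lambda_*\notin\sigma(A)$ then $\lambda_*$ is an eigenvalue of both $B(\tau)$ and $B(\tilde\tau)$, forcing $\tau=1/Q(\lambda_*)=\tilde\tau$; at that single parameter $\lambda_i$ and $\lambda_j$ are equal simple roots of $p_{B(\tau)}$, so $i=j$. I expect the main obstacle to be the second paragraph, namely verifying rigorously that imposing $Q(z_j)\notin(0,+\infty)$ excludes only a nowhere dense set of $v$: the rotation trick cleanly gives empty interior, but the passage from empty interior to a genuinely closed nowhere dense exceptional set requires the semialgebraic structure (or an equivalent dimension or continuity argument), and handling the critical points $z_j$ whose number may jump with $v$ is the delicate point.
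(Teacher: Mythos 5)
Your proposal is correct in substance, and its engine is the same as the paper's: the phase rotation $v\mapsto \e^{\ii\alpha}v$, which fixes the critical points $z_j$ of $Q$ and rotates the critical values, so that only finitely many phases can put some $1/Q(z_j)$ on the positive half-axis; combined with Proposition~\ref{AvsQ} and the removal of accidentally frozen eigenvalues, this is exactly the paper's argument (the paper imposes the slightly stronger condition $Q(z_j)\notin\Real$ rather than $Q(z_j)\notin(0,+\infty)$). Where you genuinely diverge is in the topological upgrade from ``the bad set has empty interior'' to ``the bad set lies in a \emph{closed} set with empty interior''. The paper argues by density of the interior of the good set: near any pair it produces, via \cite{RW} and a small rotation, a pair around which the good condition holds on a whole neighbourhood. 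You instead invoke Tarski--Seidenberg: the bad set is the projection of a real-algebraic set, hence semialgebraic, and a semialgebraic set with empty interior has nowhere dense closure. Both are valid; the paper's route tacitly relies on continuity of the finitely many critical values in $v$, which your explicit degree condition $\deg p_{uv}=l-1$ is what makes honest, while your route needs one small repair: the discriminant projection you identify with the bad set also contains accidental-freezing vectors, for which the rotation trick can fail (the discriminant can vanish identically in $\tau$ when $m_A$ and $p_{uv}$ share a multiple root), so one must first intersect with the complement of your step-one exceptional set --- which you in effect do by adjoining that set anyway. Your strip-plus-monodromy construction of the analytic extension is also more explicit than the paper's appeal to Remark~\ref{AvsQR}. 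Finally, a caveat you share with the paper's own proof: the claim that for a \emph{fixed arbitrary} $u\neq 0$ accidental freezing is non-generic in $v$ alone is not what \cite{RW} provides, and it can genuinely fail --- for $A=J_2(0)$ and $u=e_1$ one computes $p_{uv}(\lambda)=\bar v_1\lambda$, so $0$ is accidentally frozen for \emph{every} $v$; the paper's proof quietly perturbs $u$ as well, so both arguments really establish genericity in the pair $(u,v)$ rather than the theorem's literal fixed-$u$ statement.
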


\begin{proof}
First let us write explicitly for which $u,v$ all the statements will hold.
Due to Proposition~\ref{AvsQ} and Remark~\ref{AvsQR} the necessary and sufficient condition for this is the following: there are no accidentally frozen eigenvalues and $Q(z_j)\notin\Real$ for all zeros $z_j$ of $Q'(\lambda)$. 
We will now show that given arbitrary $u_0,v_0$ which do not satisfy the above condition one may construct  $ u,v$, lying arbitrarily close to $u_0,v_0$ such that the condition holds on some open neighbourhood of $u,v$. We will do this in two steps.
First let us choose $\tilde u,\tilde v$ such that there are no accidentally frozen eigenvalues, i.e., there are no common eigenvalues of $m_A(\lambda)$ and $p_{\tilde u\tilde v}(\lambda)$. By \cite{RW} one may pick $\tilde u$ and $\tilde v$ arbitrarily close to $u_0,v_0$ and the desired property will hold in some small neighbourhood of $\tilde u,\tilde v$.
Furthermore,  $\tilde u,\e^{\ii\theta}\tilde v$ will also obey this property for all $\theta\in(-\pi,\pi)$.  
Note that one may find $\theta\neq 0$ arbitrarily small enough, so that with $v=\e^{\ii\tilde\theta}v$ ($|\theta-\tilde \theta|$ small enough) and $u=\tilde u$ one has
   $Q(z_i)\notin\Real$ for $i=1,\dots d$.   
\end{proof}

Observe that the statement  is essentially stronger and the proof is much  easier than in Theorem 6.2 of \cite{RW}. 

\begin{proposition}\label{prop:globalangular}
The statements of Theorem~\ref{th:global} are also true for the angular parameter, i.e., if one replaces $\tau\in(0,+\infty)$ by $\tau=\e^{\ii\theta}$, $\theta\in (0,2\pi)$ in all statements. 
\end{proposition}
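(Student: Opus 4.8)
The plan is to adapt the proof of Theorem~\ref{th:global} with minimal changes, exploiting the fact that the analysis there rested entirely on the rational function $Q(\lambda)$ and on the positions of the zeros $z_j$ of $Q'(\lambda)$, neither of which depends on whether the parameter ranges over $(0,+\infty)$ or over the unit circle. The crucial observation is that the obstruction to a global analytic definition of the eigenvalue branches is always a coincidence of two branches, which by Proposition~\ref{AvsQ} occurs precisely when $\lambda_0$ is a common zero of $Q(\lambda)-1/\tau$ and $Q'(\lambda)$, i.e. at some $z_j$ with $\tau=1/Q(z_j)$. For the real-ray case the branches collide exactly when $1/Q(z_j)$ is a positive real number; for the angular case they collide exactly when $|1/Q(z_j)|=1$, equivalently $|Q(z_j)|=1$, since the parameter now traverses the values $\tau=\e^{\ii\theta}$ of modulus one.

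First I would record the precise genericity condition: there are no accidentally frozen eigenvalues, and $|Q(z_j)|\neq 1$ for every zero $z_j$ of $Q'(\lambda)$. Under this condition Proposition~\ref{AvsQ} and Remark~\ref{AvsQR} guarantee that for each $\theta\in(0,2\pi)$ every eigenvalue of $B(\e^{\ii\theta})$ outside $\sigma(A)$ is a simple zero of $Q(\lambda)-\e^{-\ii\theta}$, so the implicit function theorem yields $l$ locally analytic branches that never meet, and these can be continued and labelled globally over the connected parameter interval exactly as in Theorem~\ref{th:global}. The structurally-frozen eigenvalues account for the rest of the spectrum, and the non-crossing statement follows because a crossing $\lambda_i(\e^{\ii\theta})=\lambda_j(\e^{\ii\tilde\theta})$ would force the common value to be a zero of $Q'$ with $|Q|=1$, contradicting the genericity condition.

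Next I would carry out the perturbation argument that shows the bad set of $v$ has empty interior. Exactly as before, one first moves to $\tilde u,\tilde v$ near $u_0,v_0$ killing all accidentally frozen eigenvalues, invoking \cite{RW}; this is an open condition preserved under the replacement $\tilde v\mapsto\e^{\ii\varphi}\tilde v$, which does not alter the zeros $z_j$ of $Q'$. The remaining task is to avoid $|Q(z_j)|=1$ for finitely many $j$. Here lies the only genuine structural difference from the real case: rotating $v$ by $\e^{\ii\varphi}$ replaces $Q(\lambda)$ by $\e^{-\ii\varphi}Q(\lambda)$, which multiplies each $Q(z_j)$ by a unimodular factor and therefore leaves $|Q(z_j)|$ unchanged. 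Thus the device used in Theorem~\ref{th:global}—a small rotation of $v$—is powerless against the modulus-one obstruction, and I expect this to be the main obstacle of the proof.

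To overcome it I would instead perturb the modulus, not the phase: scaling $v\mapsto s\,\tilde v$ with $s>0$ replaces $Q$ by $\bar s\,Q$ (or, more robustly, a generic small additive perturbation of $v$ perturbs each of the finitely many values $Q(z_j)$, together with the $z_j$ themselves, real-analytically and non-constantly), so a value of $s$ arbitrarily close to one can be chosen to push every $|Q(z_j)|$ off the value $1$ simultaneously while preserving the absence of accidentally frozen eigenvalues. Since the conditions $|Q(z_j)|\neq 1$ are each the complement of the zero set of a real-analytic function of $v$ that is not identically one, the bad set is contained in a finite union of such zero sets and hence is closed with empty interior. The extension to a complex neighbourhood of the arc $\{\e^{\ii\theta}:\theta\in(0,2\pi)\}$ in the $\tau$-plane follows verbatim from the implicit function theorem as in part (iii) of Theorem~\ref{th:global}, completing the proof.
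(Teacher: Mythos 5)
Your proposal is correct and follows essentially the same route as the paper: you identify the same genericity condition (no accidentally frozen eigenvalues and $|Q(z_j)|\neq 1$ at the zeros $z_j$ of $Q'$), and you resolve the modulus-one obstruction exactly as the paper does, by replacing the phase rotation of $\tilde v$ used in Theorem~\ref{th:global} with a real positive scaling of $\tilde v$. Your explicit observation that rotating $v$ leaves $|Q(z_j)|$ invariant, which is only implicit in the paper's one-line proof, is a nice clarification but not a different method.
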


\begin{proof}
The equivalent condition for all the statements is in this case: there are no accidentally frozen eigenvalues and $|Q(z_j)|\neq 1$ for all zeros $z_j$ of $Q'(\lambda)$. Hence, in the last step of the proof we need to replace $\tilde v$ by $t\tilde v$ with $t>0$ small enough.
\end{proof}

However, note that if we replace the complex numbers by the real numbers the statement  is false, as the following theorem shows.

\begin{theorem}\label{real-imposs}
Let $A\in\Real^{n\times n}$ and $u,v\in\Real^n$ be such that for some $\tau_0>0$ an analytic definition of eigenvalues of $A+\tau uv^\top$ is not possible due to
$$
Q(x)=1/\tau,\quad Q'(x)=0,\quad Q''(x)\neq 0
$$ for some $x\in\Real$ which is not an eigenvalue of $A$, cf. Remark \ref{AvsQR}.  Then for all $\tilde A\in\Real^{n\times n}$, $\tilde u\in\Real^n$, $\tilde v
\in\Real^n$ with $\norm{\tilde v-v}$, $\norm{\tilde u-u}$ and $\norm{\tilde A-A}$ sufficiently small the analytic definition of eigenvalues of $\tilde A+\tau \tilde u\tilde v^*$ is not possible due to existence of  $\tilde x\in\Real$, $\tilde \tau_0>0$, depending continuously on $\tilde A,\tilde u,\tilde v$ with 
$$
\widetilde Q(\tilde x)=1/{\tilde \tau_0},\quad \widetilde Q'(\tilde x)=0,\quad\widetilde  Q''(\tilde x)\neq 0,
$$
 where $\widetilde Q(z)$ corresponds to the perturbation $\tilde A+\tau\tilde u\tilde v^*$ as in \eqref{Q2}.
\end{theorem}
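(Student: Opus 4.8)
The plan is to treat the three scalar equations $\widetilde Q(\tilde x)=1/\tilde\tau$, $\widetilde Q'(\tilde x)=0$, $\widetilde Q''(\tilde x)\neq 0$ as a perturbation of the given configuration at $(x,\tau_0)$ and to invoke the implicit function theorem in the \emph{real} variables $\tilde x$ and $\tilde\tau$. The crucial structural observation is that when $A,u,v$ are all real, the rational function $Q(z)=v^\top(zI_n-A)^{-1}u=p_{uv}(z)/m_A(z)$ has real coefficients, so $Q$ maps $\Real$ into $\Real\cup\{\infty\}$ and $Q',Q''$ likewise take real values at real arguments. The same holds for $\widetilde Q$ whenever $\tilde A,\tilde u,\tilde v$ are real. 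Hence the whole problem stays inside the reals: we never have to leave $\Real$ either in the spectral variable or in the parameter, and the difficulty of complex conjugate pairs (which is what makes the real case genuinely different from Theorem~\ref{th:global}) is automatically built in.

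First I would set up the single real equation $F(\tilde x;\tilde A,\tilde u,\tilde v):=\widetilde Q'(\tilde x)=0$ and regard the matrix/vector entries as parameters. Using the resolvent expansion, each of $\widetilde Q$, $\widetilde Q'$, $\widetilde Q''$ is, away from $\sigma(\tilde A)$, a rational function whose coefficients depend analytically (indeed polynomially in the entries of $\tilde u,\tilde v$ and rationally in those of $\tilde A$) on the data; in particular $F$ is jointly continuous, in fact real-analytic, in $(\tilde x,\tilde A,\tilde u,\tilde v)$ on a neighbourhood of $(x,A,u,v)$, since $x\notin\sigma(A)$ keeps the resolvent bounded there. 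At the base point we have $F(x;A,u,v)=Q'(x)=0$ and, by hypothesis, $\partial_{\tilde x}F(x;A,u,v)=Q''(x)\neq 0$. The implicit function theorem (real version) then produces a real-analytic function $\tilde x=\tilde x(\tilde A,\tilde u,\tilde v)$, defined and continuous for $\norm{\tilde A-A},\norm{\tilde u-u},\norm{\tilde v-v}$ small, with $\tilde x(A,u,v)=x$ and $\widetilde Q'(\tilde x)=0$ identically.

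Next I would simply read off the remaining two conditions by continuity. Define $\tilde\tau_0:=1/\widetilde Q(\tilde x)$; since $\widetilde Q(\tilde x)\to Q(x)=1/\tau_0\neq 0$ as the data converge and $\tau_0>0$, for small perturbations $\widetilde Q(\tilde x)$ is a nonzero real number close to $1/\tau_0>0$, so $\tilde\tau_0$ is well defined, real, positive, and depends continuously on $(\tilde A,\tilde u,\tilde v)$. This gives $\widetilde Q(\tilde x)=1/\tilde\tau_0$. Likewise $\widetilde Q''(\tilde x)\to Q''(x)\neq 0$, so $\widetilde Q''(\tilde x)\neq 0$ for small perturbations. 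Finally, $\tilde x\notin\sigma(\tilde A)$ for small perturbations because $x\notin\sigma(A)$ and the spectrum moves continuously; thus $\tilde x$ is a genuine double eigenvalue of $\tilde A+\tilde\tau_0\tilde u\tilde v^*$ (of geometric multiplicity one, Puiseux-type), and by Remark~\ref{AvsQR} no analytic definition of the eigenvalues is possible near $(\tilde x,\tilde\tau_0)$.

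The main obstacle I anticipate is not the implicit function step itself but making precise that $F$ and its $\tilde x$-derivative are jointly smooth in all the data at the base point, which requires being careful that $x$ stays off the spectrum of $\tilde A$ so that pole--zero cancellations in $\widetilde Q=p_{\tilde u\tilde v}/m_{\tilde A}$ do not spoil smoothness; this is fine here because we evaluate only at points bounded away from eigenvalues of $A$, but it is the place where one must argue rather than assert. A secondary, essentially cosmetic point is the mismatch between the real transpose $\top$ appearing in the hypothesis and the conjugate-transpose $*$ in the conclusion, which is immaterial since all vectors are real and $\tilde v^*=\tilde v^\top$; I would note this once and proceed.
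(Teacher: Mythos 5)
Your proof is correct and follows essentially the same route as the paper: both arguments hinge on the observation that $Q''(x)\neq 0$ makes $x$ a \emph{simple} zero of $Q'$, which therefore persists as a nearby real zero under small real perturbations of $(A,u,v)$, after which $\tilde\tau_0=1/\widetilde Q(\tilde x)>0$ and $\widetilde Q''(\tilde x)\neq 0$ follow by continuity. The only cosmetic difference is that the paper first clears denominators, working with the numerator polynomial $q_0=p_{uv}'m_A-p_{uv}m_A'$ and invoking persistence of simple real roots of real polynomials, whereas you apply the real implicit function theorem directly to $\widetilde Q'$ near the base point, which is the same argument in different packaging.
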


\begin{proof}
Recall the formulas \eqref{Q2} and \eqref{Qprime}
and set 
\begin{equation}\label{diff}
q_0(\lambda)=p_{uv}^\prime(\lambda)m_A(\lambda)-p_{uv}(\lambda)m_A^\prime(\lambda) ,
\end{equation}
so that
\begin{equation}\label{Q'}
Q^\prime(\lambda)= \frac{q_0(\lambda)}{m_A(\lambda)^2}.
\end{equation}
 By assumption we have that $m_A(x)\neq0$ and $q_0(x)=0$. We also get that $q_0'(x)\neq 0$, as otherwise $Q''(x)=0$.
We define $\tilde q_0(\lambda)$ analogously as $q_0(\lambda)$, i.e.,
$$
\widetilde Q^\prime(\lambda)=\frac{\tilde q_0(\lambda)}{m_{\tilde A}(\lambda)}.
$$
Both polynomials $\tilde q_0(\lambda)$ and $m_{\tilde A}(\lambda)$ have coefficients depending continuously on the entries of $\tilde A,\tilde u,\tilde v$.
As  $x$ is a simple zero  of the polynomial $q_0(\lambda)$, which is additionally real on the real line,  we have that there is a real $\tilde x$ near $x$ such that $\tilde q_0(\tilde x)=0$ and $m_{\tilde A}(\tilde x)\neq 0$ for $\tilde u$, $\tilde v$, $\tilde A$, as in the statement. Defining $\tilde t:=1/\widetilde Q(\tilde x)$ finishes the proof.
\end{proof}

\begin{remark}\rm
To give a punch line to Theorem~\ref{real-imposs} we make the obvious remark that $A,u,v$ satisfying the assumptions do exist. For each such $A$ the set of vectors $u,v\in\Real^n$ for which a double eigenvalue appears has a nonempty interior in $\Real^n$, contrary to the complex case discussed in Theorem~\ref{th:global}.  
\end{remark}

Note another reason for which the eigenvalues cannot be defined globally analytically  for real matrices.
\begin{proposition}\label{forcedcrossing}
Assume that the matrix $A\in\Real^{n\times n}$ has no real eigenvalues and let $u,v$ be two arbitrary nonzero real vectors. Then for some $\tau_0\in\Real\setminus\{ 0\}$ an analytic definition of eigenvalues of $A+\tau uv^\top$ is not possible due to 
$$
Q(x)=1/\tau,\quad Q'(x)=0
$$ for some $x\in\Real$, cf. Remark \ref{AvsQR}.  
\end{proposition}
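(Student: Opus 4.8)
The plan is to exploit the fact that, under the hypotheses, $Q$ restricts to a real-valued real-analytic function on all of $\Real$ that decays at infinity, and therefore must have a critical point at which its value is nonzero. First I would record the structural facts. Since $A$ is real, its minimal polynomial $m_A$ has real coefficients; since $u,v$ are real we have $v^*=v^\top$, so the polynomial $p_{uv}$ of \eqref{formulap} is real and $Q=p_{uv}/m_A$ is a rational function with real coefficients, cf. \eqref{Q2}. Because $A$ has no real eigenvalues, $m_A$ has no real zeros, hence $Q$ has no poles on $\Real$ and $Q|_\Real\colon\Real\to\Real$ is real-analytic. Finally, by the degree bound \eqref{degpuv} we have $\deg p_{uv}<\deg m_A$, so $Q(x)\to 0$ as $x\to\pm\infty$.

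The main case is $Q\not\equiv 0$; I will deal with the degenerate case $Q\equiv 0$ separately. Assuming $Q\not\equiv 0$, consider $|Q|$ on $\Real$: it is continuous, tends to $0$ at $\pm\infty$, and is not identically zero, so $M:=\sup_{x\in\Real}|Q(x)|>0$. Choosing $x_1$ with $|Q(x_1)|$ close to $M$ and $R>0$ with $|Q(x)|<|Q(x_1)|$ for $|x|>R$, the supremum over the compact interval $[-R,R]$ is attained and equals $M$; thus $|Q|$ attains a global maximum at some finite $x_0\in\Real$ with $|Q(x_0)|=M>0$, and in particular $Q(x_0)\neq 0$. Near $x_0$ the function $Q$ is smooth and nonvanishing, so $Q^2$ is smooth there and $0=\tfrac{d}{dx}Q(x_0)^2=2Q(x_0)Q'(x_0)$; since $Q(x_0)\neq 0$ this forces $Q'(x_0)=0$ (the real and complex derivatives agree at a real point of a real-analytic function). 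Setting $\tau_0:=1/Q(x_0)\in\Real\setminus\{0\}$ gives $Q(x_0)=1/\tau_0$ and $Q'(x_0)=0$ with $x_0\notin\sigma(A)$. By Proposition~\ref{AvsQ} and Remark~\ref{AvsQR} the point $x_0$ is then an eigenvalue of $A+\tau_0 uv^\top$ of algebraic multiplicity at least two and geometric multiplicity one (of some finite order, since $Q$ is non-constant near $x_0$), so the eigenvalues cannot be organized analytically across $(\tau_0,x_0)$ but expand in a genuine Puiseux series, which is exactly the asserted obstruction.

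It remains to address $Q\equiv 0$. Here \eqref{nextgreatpoly} shows that the characteristic polynomial of $B(\tau)$ is independent of $\tau$, so $\sigma(B(\tau))=\sigma(A)$ for every $\tau$ and the spectrum does not move; this is the trivial situation in which there is nothing to parametrize, and I would exclude it, equivalently add the standing assumption $Q\not\equiv 0$ (which amounts to $v$ not being orthogonal to the $A$-cyclic subspace generated by $u$).

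I expect the only genuine subtlety, the \emph{main obstacle}, to be the step guaranteeing that the extremum of $|Q|$ is attained at a finite point and takes a nonzero value there; this is where the decay $Q\to 0$ at infinity (from the degree bound) is essential, and it is also exactly the feature distinguishing the real line from the whole plane, where the maximum modulus principle forbids interior maxima of $|Q|$ altogether. Everything else is a direct application of Proposition~\ref{AvsQ}.
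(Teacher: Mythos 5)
Your argument is correct and is essentially the paper's own proof, made precise: the paper likewise observes that $Q$ is real-valued and differentiable on $\Real$ (no real poles, since $A$ has no real eigenvalues) and tends to $0$ as $|x|\to\infty$, hence has a real extreme point where $Q'$ vanishes, and takes $\tau_0$ to be the reciprocal of the (nonzero) extreme value. Your one genuine addition is the explicit handling of the degenerate case $Q\equiv 0$, which can occur for nonzero real $u,v$ once $n\geq 4$ (take $v$ orthogonal to the Krylov subspace generated by $u$) and in which the stated conclusion in fact fails because the spectrum is constant; your standing assumption $Q\not\equiv 0$ repairs this exception, which the paper's one-line proof silently passes over.
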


\begin{proof}
Note that $Q(\lambda)$ is real and differentiable on the real line, due to the assumptions on $A$. As $Q(\tau)\to 0$ with $|\tau|\to\infty$, one has a local real extreme point of $Q(\lambda)$.
\end{proof}

\section{The eigenvalues of $A+\tau uv^*$ for large $|\tau|$}\label{sec:intermezzo}

We shall also be concerned with the asymptotic shape of the curves $\sigma(A,u,v;t)$.
The proof of the following result was given in \cite{vdCamp}:
let $A$ be an $n\times n$ complex matrix, let $u,v$ be generic complex $n$-vectors.
Asymptotically, as $t\to\infty$, these curves are circles, one with radius going to 
infinity centered at the origin, and the others with radius going to zero, and centers at the roots of $p_{uv}(\lambda)$.
The result will be restated in a more precise form below, in Theorem \ref{vAu}, part (v).
For this we first prove the following lemma.

\begin{lemma}\label{explicitformforpuv}
Let $m_A(\lambda)=\sum_{k=0}^l m_k\lambda^k$. Then 
\begin{align}\label{eq:puvexpl}
p_{uv}(\lambda)&= {\displaystyle \sum_{i=0}^{l-1} \left(\sum_{\substack {k-j=i+1\\ k,j\geq 0}} m_kv^*A^{j} u\right) }\lambda^i .
%= \\[2mm]
%&= \lambda^{l-1} v^*u + \lambda^{l-2} (m_{l-1}v^*u+v^*Au) +\nonumber \\
%&\quad + \lambda^{l-3} (m_{l-2}v^* u+m_{l-1}v^*Au + v^*A^2u) + \nonumber
% \\& \quad+\cdots +
%\sum_{k=1}^l m_kv^*A^{k-1}u.\nonumber
\end{align}
\end{lemma}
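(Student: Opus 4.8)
The plan is to compute $p_{uv}(\lambda)$ directly from its definition \eqref{formulap}, namely $p_{uv}(\lambda)=v^* m_A(\lambda)(\lambda I_n-A)^{-1}u$, by expanding the resolvent $(\lambda I_n-A)^{-1}$ as a formal power series in $1/\lambda$ and multiplying by $m_A(\lambda)=\sum_{k=0}^l m_k\lambda^k$. First I would recall the Neumann-type expansion
\begin{equation*}
(\lambda I_n-A)^{-1}=\sum_{j=0}^{\infty}\frac{A^j}{\lambda^{j+1}},
\end{equation*}
valid for $|\lambda|$ larger than the spectral radius of $A$. Since $p_{uv}$ is a polynomial, an identity of rational functions that holds on a set with a limit point holds identically, so it suffices to establish \eqref{eq:puvexpl} for large $|\lambda|$ and then invoke the identity theorem.

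The key computation is then to substitute these two expansions and collect powers of $\lambda$:
\begin{equation*}
p_{uv}(\lambda)=\left(\sum_{k=0}^{l} m_k\lambda^k\right)\left(\sum_{j=0}^{\infty}\frac{v^*A^j u}{\lambda^{j+1}}\right)
=\sum_{k=0}^{l}\sum_{j=0}^{\infty} m_k\, (v^*A^j u)\,\lambda^{k-j-1}.
\end{equation*}
The coefficient of $\lambda^i$ is obtained by setting $k-j-1=i$, i.e. $k-j=i+1$ with $k,j\ge 0$, which is exactly the inner sum in \eqref{eq:puvexpl}. I would then argue that only the terms with $0\le i\le l-1$ survive: terms with $i<0$ must vanish because $p_{uv}$ is a genuine polynomial (no negative powers), and terms with $i\ge l$ cannot occur since $k\le l$ forces $k-j-1\le l-1$. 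This yields precisely the stated formula.

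The main obstacle, and the one point deserving genuine care, is justifying that the a priori infinite sum over $j$ truncates correctly and that the negative-power terms really cancel. The cleanest way around this is to rely on the already-quoted fact (from \eqref{degpuv} and the surrounding discussion, citing \cite{RW}) that $p_{uv}$ is a polynomial of degree at most $l-1$; this guarantees a posteriori that all coefficients of $\lambda^i$ with $i<0$ vanish, so one need not verify those cancellations by hand. Alternatively, one can make the truncation manifest by using the Cayley--Hamilton-style relation $m_A(A)=0$: writing the resolvent identity in the form $m_A(\lambda)(\lambda I_n-A)^{-1}=\sum_{i=0}^{l-1}\big(\sum_{k>j,\,k-j-1=i} m_k A^j\big)\lambda^i$, which is a polynomial in $\lambda$ with matrix coefficients, and then applying $v^*(\cdot)u$. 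Either route closes the argument, and I expect the bookkeeping of the index constraint $k-j=i+1$ to be the only place where a reader might stumble. \epr
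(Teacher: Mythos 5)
Your proposal is correct and follows essentially the same route as the paper's proof: expand the resolvent in a Laurent (Neumann) series for large $|\lambda|$, multiply by $m_A(\lambda)$, collect powers via the substitution $k-j-1=i$, and conclude that the coefficients of negative powers vanish because $p_{uv}(\lambda)$ is a polynomial. The extra remarks on the identity theorem and the Cayley--Hamilton alternative are fine but not needed; the core argument matches the paper's.
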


\begin{proof}
Recall that $p_{uv}(\lambda)=m_A(\lambda)v^*(\lambda  I_n-A)^{-1}u$.
Expanding $(\lambda  I_n-A)^{-1}$ in
Laurent series for $|\lambda|\geq \|{A}\|$ we obtain
$$
p_{uv}(\lambda)= m_A(\lambda)v^*(\lambda  I_n-A)^{-1}u
=\sum_{k=0}^l \sum_{j=0}^\infty
m_kv^*A^{j} u \lambda^{k-j-1}.
$$
Put $k-j-1=i$ and interchange the order of summation to see that 
$$
p_{uv}(\lambda)=
\sum_{i=-\infty}^{l-1} \left(\sum_{\substack{ k-j-1=i\\ k,j\geq 0}} 
m_k v^*A^{j}u\right)\lambda^i .
$$
However, $p_{uv}(\lambda)$ is a polynomial in $\lambda$, hence the  sum from $i=-\infty$ to
$-1$ vanishes, and we arrive at formula \eqref{eq:puvexpl}.
\end{proof}

Next, we analyze the roots of the polynomial $p_{B(\tau)}=
m_A(\lambda)- \tau p_{uv}(\lambda)$ as $\tau \to\infty$. We have already shown in 
\cite{RW} that if $u^*v\not= 0$, then $l-1$ of these roots will approximate
the roots of $p_{uv}(\lambda)$, while one goes to infinity. The condition $u^*v\neq 0$ obviously holds for generic $u,v$, however the next theorem presents the full picture in view of later applications to structured matrices. 

\begin{theorem}\label{vAu}
Let $A\in\Comp^{n\times n}$, $u,v\in\Comp^n$ and let $l\in\mathbb{N}$ denote the degree of the minimal polynomial $m_A(\lambda)$. Assume also that 
\begin{equation}\label{vAue}
v^*u=\cdots =v^*A^{\kappa-1}u=0,\quad v^*A^{\kappa}u\not= 0,
\end{equation}
for some $\kappa \in \{0,\dots,l-1 \}$
and put 
$$
v^*A^{\kappa}u=r_{\kappa}e^{\ii\theta_{\kappa}}.
$$ 
Then
\begin{itemize}
\item[(i)] $p_{uv}(\lambda)$ is of degree $l-\kappa-1$;
\item[(ii)] $l-\kappa-1$ eigenvalues  of $B(\tau)$ converge to the roots of $p_{uv}(\lambda)$ as
$\tau\to\infty$; 
\item[(iii)] there are $\kappa+1$ eigenvalues $\lambda_1(\tau),\dots,\lambda_{\kappa+1}(\tau)$ of $A+\tau uv^*$ which go to infinity with $r=|\tau|\to\infty$
as
$$
\lambda_j(re^{\ii\theta}) = \sqrt[\kappa+1]{rr_{\kappa}} e^{ \ii(\frac{1}{\kappa+1} (\theta+\theta_{\kappa}) +\frac{2j}{\kappa+1}\pi)}+O(1), \qquad j=1, 2, \ldots , \kappa+1,
$$
where $\theta\in[0,2\pi)$ is fixed,
and for all of them we have
$$
\frac{d\lambda_j}{d\tau}= \frac{v^*A^\kappa u}{l\lambda_j^{\kappa}} +O\left(\lambda^{-(k+1)}\right).
$$
so these eigenvalues can be parametrized by a curve
$$
\Gamma(\theta)= (rr_\kappa)^{\frac1{\kappa+1}}\exp(\ii\theta) + O(1),\quad  (r\to\infty);
$$
\item[(iv)]
as $\theta\to 2\pi$ one has, after possibly renumerating the eigenvalues \hfill\break $\lambda_1(\tau),\dots,\lambda_{\kappa+1}(\tau)$, that
$$
\lambda_j(re^{\ii\theta})\to \lambda_{j+1}(r), \quad j=1,\dots , \kappa, \quad \lambda_{\kappa +1} (re^{\ii\theta})\to \lambda_1(r);
$$
\item[(v)] additionally,  let $\zeta_1,\dots , \zeta_\nu$ denote the roots of the polynomial $p_{uv}(\lambda)$ with multiplicities respectively $k_1,\dots k_\nu$.
Denote
$$
v^*(\zeta_j I_n-A)^{-(k_j+1)}u=\rho_{j}e^{\ii \theta_j}, \qquad j=1, \ldots , \nu.
$$
Then 
$\sigma(A,u,v;t)$  for sufficiently large  $\tau$  can be parametrized by disjoint curves $\Gamma_1(\theta),\dots,\Gamma_{\nu+1}(\theta)$, where the $\kappa +1$ eigenvalues which go to infinity trace out a curve
$$
\Gamma_{\nu+1}(\theta)= (rr_\kappa)^{\frac1{\kappa+1}}\exp(\ii\theta) + O(1)
$$
while the $k_j$ eigenvalues near $\zeta_j$ trace out a curve $\Gamma_j(\theta)$ which is of the form
$$
\Gamma_j(\theta)=\zeta_j+|\tau|^{-\frac{1}{k_j}}\rho_j^{-\frac{1}{k_j}}e^{\ii \theta}+O\left(|\tau|^{-\frac{2}{k_j}}\right),\qquad 0\leq\theta\leq2\pi,
$$
%$$
%\Gamma_i(\theta)\simeq r^{-\frac1{k_j}}\exp(\ii\theta)+\zeta_i,\quad i=1,\dots , \nu ,
%$$
 with $r=|\tau|\to\infty$.
\end{itemize}
\end{theorem}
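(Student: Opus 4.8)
The plan is to handle all five parts through the single characteristic equation
$$
p_{B(\tau)}(\lambda)=m_A(\lambda)-\tau p_{uv}(\lambda)=0,
$$
equivalently $Q(\lambda)=1/\tau$ with $Q=p_{uv}/m_A$, reading off the large-$|\tau|$ behaviour by splitting the $l$ roots into a bounded cluster and an escaping cluster. Part (i) I would dispose of by a direct reading of Lemma~\ref{explicitformforpuv}: under \eqref{vAue} every term with $j<\kappa$ in \eqref{eq:puvexpl} drops out, the coefficient of $\lambda^{l-\kappa-1}$ collapses to $m_l\,v^*A^\kappa u=v^*A^\kappa u\neq0$ (recall $m_l=1$ since $m_A$ is monic), while for each $i>l-\kappa-1$ the constraints $k-j=i+1$, $k\le l$, $j\ge\kappa$ are jointly infeasible; hence $\deg p_{uv}=l-\kappa-1$ with leading coefficient $v^*A^\kappa u$.

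For (ii) I would divide the characteristic equation by $\tau$, so that $\tfrac1\tau m_A(\lambda)-p_{uv}(\lambda)$ tends to $-p_{uv}$ uniformly on compact sets; by Hurwitz's theorem exactly $l-\kappa-1$ roots (with multiplicity) converge to the zeros of $p_{uv}$, and since $p_{B(\tau)}$ has degree $l$ the remaining $\kappa+1$ roots must escape to infinity, giving (iii). To pin their asymptotics down I would substitute $\lambda=s\,\tau^{1/(\kappa+1)}$ and keep leading terms; using (i) the dominant balance is $s^{\kappa+1}=v^*A^\kappa u=r_\kappa e^{\ii\theta_\kappa}$, producing the $\kappa+1$ stated leading values, the $O(1)$ term being the next coefficient of the Puiseux expansion of $\lambda$ in powers of $\tau^{1/(\kappa+1)}$. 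The derivative formula I would obtain from implicit differentiation, $\frac{d\lambda}{d\tau}=p_{uv}(\lambda)/\bigl(m_A'(\lambda)-\tau p_{uv}'(\lambda)\bigr)$, followed by substitution of the leading orders along the escaping branch (where $\tau\sim\lambda^{\kappa+1}/v^*A^\kappa u$). Part (iv) is then simply the monodromy of the $(\kappa+1)$-th root already visible in (iii): as the argument of $\tau$ grows by $2\pi$, each branch angle $\tfrac1{\kappa+1}(\theta+\theta_\kappa)+\tfrac{2j}{\kappa+1}\pi$ advances by $2\pi/(\kappa+1)$, cyclically permuting $\lambda_1,\dots,\lambda_{\kappa+1}$.

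The heart of the matter is (v), the local picture near a root $\zeta_j$ of $p_{uv}$ of multiplicity $k_j$. Since $m_A(\zeta_j)\neq0$, the point $\zeta_j$ is a zero of $Q$ of order exactly $k_j$, and I would expand $Q(\lambda)=\frac{1}{k_j!}Q^{(k_j)}(\zeta_j)(\lambda-\zeta_j)^{k_j}+\cdots$. The key identity is $Q^{(m)}(\lambda)=(-1)^m m!\,v^*(\lambda I_n-A)^{-(m+1)}u$, which gives $\tfrac{1}{k_j!}Q^{(k_j)}(\zeta_j)=(-1)^{k_j}v^*(\zeta_j I_n-A)^{-(k_j+1)}u=(-1)^{k_j}\rho_j e^{\ii\theta_j}$. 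Solving $Q(\lambda)=1/\tau$ locally then yields $(\lambda-\zeta_j)^{k_j}\sim(-1)^{k_j}\rho_j^{-1}e^{-\ii\theta_j}\tau^{-1}$, whose $k_j$ roots sweep out, as the argument of $\tau$ runs over $[0,2\pi)$, a circle of radius $|\tau|^{-1/k_j}\rho_j^{-1/k_j}$ centred at $\zeta_j$, with the $O(|\tau|^{-2/k_j})$ correction coming from the next Taylor coefficient of $Q$. Disjointness of $\Gamma_1,\dots,\Gamma_{\nu+1}$ for large $|\tau|$ is then automatic, since the bounded clusters contract onto the distinct points $\zeta_j$ while $\Gamma_{\nu+1}$ leaves every bounded region.

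The main obstacle I anticipate is not any single computation but the uniform control of the error terms: justifying the Puiseux expansions and the implicit-function-theorem step so that each cluster is genuinely parametrized by a single smooth curve, and bounding the $O(1)$ and $O(|\tau|^{-2/k_j})$ remainders uniformly in the argument of $\tau$. Within this, the one genuinely nonroutine algebraic step is the translation in (v) of the local Taylor coefficient of $Q$ into the resolvent quantity $v^*(\zeta_j I_n-A)^{-(k_j+1)}u$, which is what makes the radius $\rho_j^{-1/k_j}$ explicit.
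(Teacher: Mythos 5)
Your proposal follows the same route as the paper's own proof in every part: (i) is read off from Lemma~\ref{explicitformforpuv} exactly as in the paper; for (ii) you invoke Hurwitz's theorem where the paper cites Kato, but the content (locally uniform convergence of $\frac{1}{\tau}m_A-p_{uv}$ to $-p_{uv}$) is the same; for (iii) the dominant balance $\lambda^{l-\kappa-1}(\lambda^{\kappa+1}-\tau v^*A^\kappa u)$ and the Puiseux expansion in powers of $\tau^{\pm 1/(\kappa+1)}$ match the paper; (iv) is the same monodromy observation; and in (v) your identity $Q^{(m)}(\lambda)=(-1)^m m!\,v^*(\lambda I_n-A)^{-(m+1)}u$ is precisely what the paper's Neumann-series expansion of the resolvent around $\zeta_j$ encodes (you even keep the signs $(-1)^k$ that the paper's displayed expansion drops, which only amounts to a phase in the parametrization). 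One caveat you should add in (v): your claim $m_A(\zeta_j)\neq 0$ is not automatic — if $\zeta_j$ is a common root of $p_{uv}$ and $m_A$ it is an accidentally frozen eigenvalue, the quantity $v^*(\zeta_j I_n-A)^{-(k_j+1)}u$ is undefined, and the circle picture fails; the paper explicitly flags and excludes this case.

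The one place where you genuinely diverge from the paper is the derivative formula in (iii), and there your prescription is the more careful one — so much so that it will not reproduce the printed statement. The paper gets the denominator $l\lambda^{\kappa}$ by writing $m_A'(\lambda)-\tau p_{uv}'(\lambda)=l\lambda^{l-1}+\text{lower order terms}$, i.e., by discarding $\tau p_{uv}'(\lambda)$. On the escaping branch this is not legitimate unless $\kappa=l-1$: substituting $\tau\sim\lambda^{\kappa+1}/(v^*A^\kappa u)$, as you propose, gives $\tau p_{uv}'(\lambda)\sim(l-\kappa-1)\lambda^{l-1}$, of the same order as $m_A'(\lambda)$, hence
$$
\frac{d\lambda}{d\tau}=\frac{p_{uv}(\lambda)}{m_A'(\lambda)-\tau p_{uv}'(\lambda)}\approx\frac{v^*A^\kappa u\,\lambda^{l-\kappa-1}}{\bigl(l-(l-\kappa-1)\bigr)\lambda^{l-1}}=\frac{v^*A^\kappa u}{(\kappa+1)\lambda^{\kappa}},
$$
with $\kappa+1$, not $l$, in the denominator. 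This is also what the leading asymptotics $\lambda^{\kappa+1}\approx\tau v^*A^\kappa u$ of the same item force upon differentiation, and a two-by-two check settles it: for $A=\mathrm{diag}(0,1)$, $u=v=e_1$ (so $\kappa=0$, $l=2$) the escaping eigenvalue is $\lambda(\tau)=\tau$ with derivative $1$, while the theorem's formula predicts $1/2$. The two constants agree exactly when $\kappa=l-1$, which happens to hold in the paper's Jordan-block example. So if you execute your plan as written, expect to obtain (and you should state) the corrected constant $\kappa+1$ rather than the $l$ appearing in the theorem and in the paper's proof.
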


\begin{proof}
Statement (i) results directly from Lemma \ref{explicitformforpuv}. 

Statement (ii) is a consequence of the fact that the characteristic polynomial of $A+\tau uv^*$ equals  $p_{B(\tau)}(\lambda) = m_A(\lambda)- \tau p_{uv}(\lambda)$ (formula \eqref{nicepoly}). Then, by \cite[Section II.1.7]{Kato} statement (ii) follows. 

For statement (iii) and following, by \eqref{eq:puvexpl} 
\begin{align*}
p_{B(\tau)}(\lambda)& = \sum_{i=0}^l m_i\lambda^1 -\tau\sum_{i=0}^{l-1} \left(
\sum_{\substack {k-j=i+1\\ k,j\geq 0}} m_kv^*A^ju\right) \lambda^i \\
&=\lambda^l +\sum_{i=0}^{l-1}\left( m_i-\tau \left( 
\sum_{\substack {k-j=i+1\\ k,j\geq 0}} m_kv^*A^ju\right) \lambda^i \right).
\end{align*}
In case $v^*u=0, v^*Au=0, \ldots , v^*A^{\kappa -1 } u=0$ and $v^*A^\kappa u\not=0$, this becomes 
\begin{align*}
p_{B(\tau)}(\lambda) = \lambda^l &+\lambda^{l-1}m_{l-1} + \cdots + \lambda^{l-\kappa}m_{l-\kappa} + \\
&+\lambda^{l-\kappa-1}(m_{l-\kappa-1}-\tau v^*A^\kappa u) + \mbox{lower order terms} .
\end{align*}
For large values of $\lambda$ and $\tau$ the dominant terms are $\lambda^{l-\kappa-1} (\lambda^{\kappa+1}-\tau v^*A^\kappa u)$,
showing that indeed, the largest roots behave asymptotically as the $(\kappa+1)$-th roots of $\tau v^*A^\kappa u$.

 Moreover, for the derivative of $\lambda$ with respect to $\tau$ we have by the implicit function theorem
$$
\frac{d\lambda}{d\tau} = -{\frac{\partial p_{B(\tau)}}{\partial \tau}}/{\frac{\partial p_{B(\tau)}}{\partial \lambda}}
= \frac{p_{uv}(\lambda)}{m_A^\prime(\lambda)-\tau p_{uv}^\prime(\lambda)}.
$$
Now $p_{uv}(\lambda)=\lambda^{l-\kappa-1}v^*A^\kappa u +\mbox{lower order terms}$, and 
$$
m_A^\prime(\lambda)-\tau p_{uv}^\prime(\lambda) =l\lambda^{l-1} +\mbox{lower order terms}.
$$
So
$$
\frac{d\lambda}{d\tau} =\frac{\lambda^{l-\kappa-1}v^*A^\kappa u +\mbox{lower order terms} }{l\lambda^{l-1} +\mbox{lower order terms} }.
$$
Dividing by $\lambda^{l-\kappa-1}$ in numerator and denominator we arrive at 
$$
\frac{d\lambda}{d\tau} =\frac{v^*A^\kappa u}{l} \frac{1}{\lambda^\kappa} + O\left(\lambda^{-(k+1)}\right).
$$
which concludes the proof of part (iii).

Note that also part (iv) follows directly from part (iii) combined with the continuity of the eigenvalues as a function of $\theta$ (even when taking $\theta\in\mathbb{R}$, rather than restricting to 
$\theta\in [0, 2\pi)$).

For part (v) and the remaining parts of part (iii) in the general case, recall that the eigenvalues of $A+\tau uv^*$ which are not eigenvalues of $A$ are (among the) roots of $m_A(\lambda)-\tau p_{uv}(\lambda)$, which are the same as the roots of the polynomial
$\frac{1}{\tau}m_A(\lambda)-p_{uv}(\lambda)$. First we take $\theta=0$, so $\tau=t$.

Put $s=1/t$, and consider the polynomial $sm_A(\lambda)-p_{uv}(\lambda)$ as a small perturbation of the polynomial $-p_{uv}(\lambda)$. By general theory concerning the behavior of the roots of a polynomial under such a perturbation (see, e.g., \cite{Knopp}, \cite{Baumgartel} Appendix, or Puiseux's original 1850 paper) for small $s$ the roots of $sm_A(\lambda)-p_{uv}(\lambda)$ near the roots of $p_{uv}(\lambda)$ are locally described by a Puiseux series of the form
$$
\zeta_j+c_{1j}s^{\frac{1}{k_j}}+c_{2j}s^{\frac{2}{k_j}}+\cdots , \qquad j=1, \ldots , \nu
$$
with $c_{1j}\not= 0$. Here $k_j$ is the multiplicity of $\zeta_j$ as a root of $p_{uv}(\lambda)$, and there are $k_j$ roots of $sm_A(\lambda)-p_{uv}(\lambda)$ near $\zeta_j$.

Next we do not consider $t \in \mathbb{R}$ but $\tau=te^{\ii\theta} \in \mathbb{C}$. Replacing $u$ by $e^{\ii\theta}u$
we see that the roots of $m_A(\lambda)-\tau p_{uv}(\lambda)$ for large $\tau$ near $\zeta_j$ behave as $\zeta_j+c_{1j}\tau^{-1/k_j}$. For fixed $|\tau|=r$ these $k_j$ roots near $\zeta_j$ trace out a curve $\Gamma_j(\theta)=\zeta_j+c_{1,j}
r^{-1/k_j}exp(\ii\theta)+o(r^{-1/k_j})$ with $r\to\infty$. 

We shall make these arguments much more precise as follows. Remember that an eigenvalue $\lambda_0$ of $A+\tau uv^*$ which is not also an eigenvalue of $A$ is a solution to $Q(\lambda)=1/\tau$. Consider large values of $|\tau|$ and consider also the large eigenvalues of $A+\tau uv^*$, for instance for $\tau$ large enough there is at least one eigenvalue with $|\lambda|>\|A\|$. Then
$\lambda$ satisfies
$$
1=\tau v^*\sum_{j=0}^\infty \frac{A^j}{\lambda^{j+1}}u=\tau\sum_{j=\kappa}^\infty  \frac{v^*A^j u}{\lambda^{j+1}}
$$
by the definition of $\kappa$. Hence
\begin{equation}\label{lambdatauinf}
\lambda^{\kappa+1}=\tau v^*A^\kappa u+\frac{\tau}{\lambda} v^*A^{\kappa+1}u+ \frac{\tau}{\lambda^2} v^*A^{\kappa+2}u+\cdots ,
\end{equation}
and so 
$$
\left( \frac{\lambda^{\kappa+1}}{\tau}-v^*A^\kappa u\right)=O\left(\frac{1}{\lambda}\right)
$$
Thus
$$
\lambda\approx (\tau)^{\frac{1}{\kappa+1}}(v^*A^\kappa u)^{\frac{1}{\kappa+1}}.
$$
Again, we can be much more precise than this: we know that $\lambda$ as a function of $\tau$ has a Puiseux series expansion, and if we set
$$
\lambda=c_{-1}\tau^{\frac{1}{\kappa+1}}+ c_0+ c_1\tau^{-\frac{1}{\kappa+1}}+\cdots
$$
one checks from the equation \eqref{lambdatauinf} that the following hold: 
\begin{align*}
c_{-1}&=(v^*A^ku)^{\frac{1}{\kappa+1}},\\
c_0&=\frac{1}{k+1}\cdot\frac{v^*A^{k+1}u}{v^*A^ku},\\
c_1&=\frac{1}{k+1}\cdot\frac{1}{(v^*A^ku)^{\frac{k+2}{k-1}}}\cdot (v^*A^{k+2}u-(k+2)v^*A^{k+1}u).
\end{align*}
Hence
$$
\lambda = (\tau)^{\frac{1}{\kappa+1}}(v^*A^\kappa u)^{\frac{1}{\kappa+1}} +
\frac{1}{k+1}\cdot\frac{v^*A^{k+1}u}{v^*A^ku}+O\left(\tau^{-\frac{1}{k+1}}\right)
$$
This completes the proof of part (iii), and gives the precise form of $\Gamma_{\nu+1}(\theta)$ for $r=|\tau|$ large enough.

Next we consider the eigenvalues of $A+\tau uv^*$ which are close to $\zeta_j$ for large $\tau$. Recall that $\zeta_j$ is a root of $p_{uv}(\lambda)$, so $m_A(\zeta_j)v^*(\zeta_j I_n-A)^{-1}u=0$.
If $\zeta_j$ would be a zero of $m_A(\lambda)$, then $\zeta_j$ is an accidentally frozen eigenvalue for $A,u,v$ and so is an eigenvalue of $B(\tau)$ for all $\tau$. Otherwise, $\zeta_j$ is not an eigenvalue of $A$, and we have $v^*(\zeta_j I_n-A)^{-1}u=0$. For $\lambda$ near $\zeta_j$ write 
\begin{align*}
v^*(\lambda I_n-A)^{-1}u & = v^*((\lambda-\zeta_j)+(\zeta_j I_n-A))^{-1}u\\ &=
v^*((\lambda-\zeta_j)(\zeta_j I_n-A)^{-1}+I)^{-1}(\zeta_j I_n-A)^{-1}u\\
&=v^*\sum_{k=0}^\infty (\lambda-\zeta_j)^k(\zeta_j I_n-A)^{-(k+1)}u\\
&=v^*(\zeta_j I_n-A)^{-1}u+v^*\sum_{k=1}^\infty (\lambda-\zeta_j)^k(\zeta_j I_n-A)^{-(k+1)}u.
\end{align*}
Since the first term is zero, we have
$$
v^*(\lambda I_n-A)^{-1}u =(\lambda-\zeta_j)v^*(\zeta_j I_n-A)^{-2)}u +(\lambda-\zeta_j)^2v^*(\zeta_j I_n-A)^{-3)}u+\cdots
$$
Again use the fact that any eigenvalue of $A+\tau uv^*$ which is not an eigenvalue of $A$ satisfies
$$
\frac{1}{\tau}=v^*(\lambda  I_n-A)^{-1}u.
$$
So, if the root $\zeta_j$ of $p_{uv}(\lambda)$ has multiplicity $k_j$, then 
$$
\frac{1}{\tau}=(\lambda-\zeta_j)^{k_j}v^*(\zeta_j I_n-A)^{-(k_j+1)}u +(\lambda-\zeta_j)^{k_j+1}v^*(\zeta_j I_n-A)^{-(k_j+2)}u+\cdots.
$$
For the moment, let us denote $v^*(\zeta_j I_n-A)^{-k}u$ by $a_{j,k}$. We know from the considerations in an earlier paragraph of the proof that $\lambda$ can be expressed as a Puiseux series in $\tau^{-1}$, let us say
$$
\lambda =\zeta_j+c_{1,j}\tau^{-\frac{1}{k_j}}+c_{2,j}\tau^{-\frac{2}{k_j}}+\cdots .
$$
Then $\lambda-\zeta_j=c_{1,j}\tau^{-\frac{1}{k_j}}+c_{2,j}\tau^{-\frac{1}{k_j}}+\cdots$, and inserting that in the above equation we obtain 
\begin{align*}
\frac{1}{\tau}&= \frac{1}{\tau}c_{1,j}^{k_j}a_{j,k_j+1} + k_jc_{1,j}^{k_j-1}\tau^{-\frac{k_j-1}{k_j}}\cdot c_{2,j}\tau^{-\frac{2}{k_j}}a_{j,k_j+1} +\\& \  + c_{1,j}^{k_j+1}\tau^{-\frac{k_j+1}{k_j}}a_{j,k_j+2} + {\rm smaller\ order\ terms}= \\
&=\frac{1}{\tau}c_{1,j}^{k_j}a_{j,k_j+1}
+\tau^{-\frac{k_j+1}{k_j}}\left( k_j c_{1,j}^{k_j-1} c_{2,j}a_{j,k_j+1} +  c_{1,j}^{k_j+1}a_{j,k_j+2}\right) +\cdots .
\end{align*}
Equating terms of equal powers in $\tau$ gives
$$
c_{1,j}=a_{j,k_j+1}^{-k_j}=\left(\frac{1}{v^*(\zeta_j I_n-A)^{-(k_j+1)}u}\right)^{\frac{1}{k_j}} 
$$
and using this we can derive a formula for $c_{2,j}$, which after some computation becomes
$$
c_{2,j}=-\frac{1}{k_j}\cdot\frac{v^*(\zeta_j I_n-A)^{-(k_j+2)}u}{v^*(\zeta_j I_n-A)^{-(k_j+1)}u}.
$$

Let us denote
$$
v^*(\zeta_j I_n-A)^{-(k_j+1)}u=\rho_{j}e^{\ii \theta_j}, \qquad j=1, \ldots , \nu.
$$
Then $c_{1,j}=\rho_j^{-\frac{1}{k_j}} e^{\ii \frac{\theta_j}{k_j}}$ and the $k_j$ eigenvalues near $\zeta_j$ trace out a curve $\Gamma_j(\theta)$ which is of the form
$$
\Gamma_j(\theta)=\zeta_j+|\tau|^{-\frac{1}{k_j}}\rho_j^{-\frac{1}{k_j}}e^{\ii \theta}+O\left(|\tau|^{-\frac{2}{k_j}}\right),\qquad 0\leq\theta\leq2\pi.
$$
This completes the proof of part (v).
\end{proof}

\begin{remark}\rm
As an alternative argument to much of the results of the previous theorem in the generic case,
consider $\tilde{B}(s)=uv^*+sA$, where $s=\frac{1}{\tau}=\frac{1}{t}e^{-\ii\theta}$.
For $t\rightarrow\pm\infty$ and thus $s\approx0$ 
we will denote 
the eigenvalues of $\tilde{B}(s)$ by $\nu_j(s)$ for $j=1,\ldots,n$. Note that there is a close relationship between $\nu_j(\tau^{-1})$ and $\lambda_j(\tau)$, namely, $\lambda_j(\tau)=\tau\nu_j(\tau^{-1})$.

Consider $\tilde{B}(s)$ as a perturbation of $uv^*$. Note that $uv^*$ has eigenvalues $vu^*$ and $0$, the latter with multiplicity $n-1$, and that generically, when $vu^*\not= 0$, $uv^*$ is diagonalizable. In the non-generic case, when $v^*u=0$, $uv^*$ has only eigenvalue $0$, with one Jordan block of size two, and $n-2$ Jordan blocks of size one. 

In the
generic case where $v^*u \neq 0$,
according to \cite{Kato}, Section II.1.2, in particular formula (1.7) there, also Section II.1.7 and Theorem 5.11 in Section II.5.6, and Lidskii's theorem \cite{Lidskii}, see also \cite{Baumgartel}, and for a nice exposition \cite{Moro-Burke-Overton},
we have that $B(s)$ has $n$ separate eigenvalues given by
\begin{equation*}
 \sigma(\tilde{B}(s)) = \{\nu_j(s)\} = \bigg\{ \begin{array}{cc}
\mbox{ }\mbox{ }\mbox{ } v^*u + sk_{1,1} + s^2k_{2,1} + \cdots \mbox{ } &  j=1 \mbox{ }\mbox{ }
\mbox{ }\mbox{ }\mbox{ }\mbox{ }\mbox{ }\mbox{ }\mbox{ } \\
\mbox{ }\mbox{ }\mbox{ }\mbox{ }\mbox{ }\mbox{ } 0 + sk_{1,j} + s^2k_{2,j} + \cdots \mbox{ } &  j=2,\ldots,n. \\
\end{array}
\end{equation*}
First we take $\theta=0$, so $\tau=t$.
In that case $s=\frac{1}{\tau}=\frac{1}{t}$. For $j=2,\ldots,n$ we have 
$\nu_j(\frac{1}{t}) = \frac{1}{t}k_{1,j} + \frac{1}{t^2}k_{2,j} + \cdots$ and therefore
\begin{equation*}
 \lambda_j(t) = t\nu_j(\frac{1}{t}) = k_{1,j} + \frac{1}{t}k_{2,j} + \cdots.
\end{equation*}
This works the same for $j=1$, then $\lambda_1(t)=tv^*u+k_{1,1}+\frac{1}{t}k_{2,1}+\cdots$.

Now consider the limit of $\lambda_j(t)$ as $t\rightarrow\pm\infty$ for $j=2, \ldots , n$.
By (ii) this is one of the roots of $p_{uv}(\lambda)$ or an eigenvalue of $A$. Generically, the roots of $p_{uv}(\lambda)$ will be simple.
After possibly rearranging the eigenvalues we may assume that for $j=2, \ldots , l$ the eigenvalue $\lambda_j(t)$ converges to one of the roots of $p_{uv}(\lambda)$, while for $j=l+1, \ldots , n$ the eigenvalue $\lambda_j(t)$ is constantly equal to an eigenvalue of $A$. Then 
\begin{equation*}
 \lim_{t\rightarrow\infty} \lambda_j(t) = \lim_{t\rightarrow\infty} t\nu_j(\frac{1}{t}) = k_{1,j} \mbox{ for } j=2,\ldots, l
\end{equation*}
where $k_{1,j}$ is either one of the roots of $p_{uv}(\lambda)$ or an eigenvalue of $A$. Thus $\lambda_j(t) = k_{1,j} + \frac{1}{t}k_{2,j}+O\left(\frac{1}{t^2}\right)$ for $j=2,\ldots, l$. 

Next we do not consider $t \in \mathbb{R}$ but $\tau=te^{\ii\theta} \in \mathbb{C}$.  Make the following transformation:
\begin{equation*}
 \sigma(A + \tau uv^*) = \sigma(A + te^{\ii\theta}uv^*) = \sigma(A + t\tilde{u}v^*)
\end{equation*}
where $\tilde{u} = e^{\ii\theta}u$. Note that $p_{uv}(\lambda)$ and $p_{\tilde{u}v}(\lambda)$ only differ by the constant $e^{\ii\theta}$ and so they have the same roots.
Applying the arguments from the previous paragraphs we obtain
\begin{equation*}
 \lambda_1(\tau) = \lambda_1(te^{\ii\theta}) = te^{\ii\theta}v^*u +O(1) 
\end{equation*}
and
\begin{equation*}
 \lambda_j(\tau) = \lambda_j(te^{\ii\theta}) = k_{1,j} + \frac{1}{t}e^{-\ii\theta}k_{2,j}+O\left(\frac{1}{t^2}\right), \mbox{ }\mbox{ } j=2,\ldots, l.
\end{equation*}
Consider for fixed $t$ the curve
\begin{equation*}
  \zeta_{2,t} = \{ \lambda_j(te^{\ii\theta}) \mid j=2,\ldots,n,\mbox{ } 0 \leq \theta < 2\pi \}.
\end{equation*}
The arguments above show that asymptotically $\zeta_{2,t}$ is a circle with radius $\frac{1}{t}k_{2,j}$ centered at $k_{1,j}$, which is a root of $p_{uv}(\lambda)$.
\end{remark}
\cblack

\begin{remark}\rm
Note that if \eqref{vAue} holds for $\kappa=l-1$ then $p_{uv}(\lambda)\equiv 0$ and by \eqref{nicepoly} the characteristic polynomial of the perturbed matrix coincides with the characteristic polynomial of $A$.
\end{remark}

\begin{remark}\rm
Consider the case $\kappa=2$. Then the eigenvalues that
go to infinity will trace out a circle, but each of them only traces out half a circle. In addition the speed with which the eigenvalues go to zero is considerably slower than
when $\kappa=1$.
\end{remark}

\begin{example}\rm
As an extreme example, consider $A=J_n(0)$, the $n\times n$ Jordan block with zero eigenvalue, and let $u=e_n$ and $v=e_1$, where $e_j$ is the $j$'th unit vector. Then 
$p_{uv}(\lambda)=1$, and the eigenvalues 
of $A+\tau uv^*$ are the $n$'th roots of
$\tau$, $\lambda_k(\tau)= \sqrt[n]{r}e^{i\frac{2k}{n}\pi}$ for $k=1, \cdots ,n$,
and $\frac{d\lambda}{d\tau} = \frac{1}{n}\tau^{\frac{1}{n}-1}= 
\frac{e_1^*A^{n-1}e_n}{n}\cdot \frac{1}{\lambda^{n-1}}$, as predicted by the theorem. 
\end{example}

\begin{example}\rm
Consider $A=I_2$, and the same $u$ and $v$ as in the previous example.
In this case $v^*A^ku=0$ for all $k$. Consequently, as is also immediate by
looking at the matrix, none of the eigenvalues moves. More generally, this happens
as soon as $A$ is upper triangular and $uv^*$ is strictly upper triangular.
\end{example}

\begin{example}\rm
As an example consider $A=\begin{bmatrix} 0 & 1 \\ -1 & -1\end{bmatrix}$, take $u=\begin{bmatrix} 0 \\1 \end{bmatrix}$ and $v=\begin{bmatrix} 1 \\ 1\end{bmatrix}$. Then $m_A(\lambda)=\lambda^2+\lambda+1$, $p_{uv}(\lambda)=\lambda+1$, and one computes $p_{uv}^\prime(\lambda)m_A(\lambda)-p_{uv}(\lambda)m_A^\prime(\lambda) =-\lambda(\lambda+2)$. So $Q^\prime (0)=0, Q^\prime(-2)=0$, and so $z_1=0,z_2=-2$. Finally the corresponding $t_1=\frac{1}{|Q(0)|}=1$, and $t_2=\frac{1}{|Q(-2)|}=3$.
The eigenvalue curves are shown below. Note the difference in scales between the individual graphs.
\begin{figure}
\begin{center}
\includegraphics[width=12cm,height=8cm]{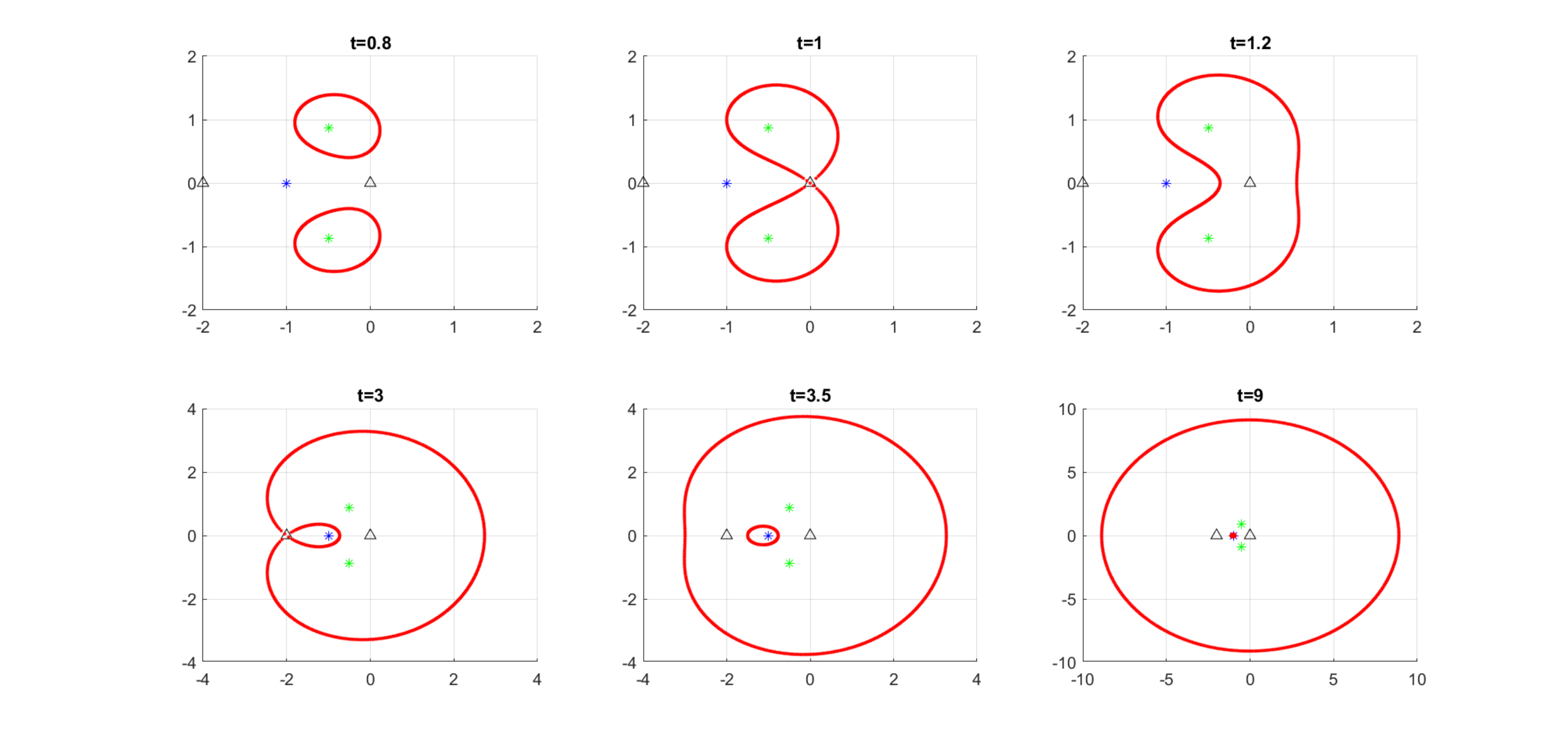}
\end{center}
\caption{Eigenvalue curves of $A+te^{\ii \theta}uv^*$ as functions of $\theta$ for several values of $t$. Green stars indicate the eigenvalues of $A$, the blue star indicates the root of $p_{uv}(\lambda)$ and the triangles the zeroes of $Q^\prime(\lambda)$. The final graph also illustrates the asymptotic circular behaviour for large values of $t$.}
\end{figure}
\end{example}

\section{Structured matrices}\label{s:str}
Generic rank one perturbations for several classes of structured matrices were
studied intensively over the last decade. We refer the reader to:  \cite{MMRR1} for  complex $J$-Hamiltonian complex $H$-symmetric matrices, \cite{MMRR2} for complex $H$-selfadjoint matrices
including the effects on the sign characteristic, \cite{MMRR4} for complex
$H$-orthogonal and complex $J$-symplectic  as well as complex $H$-unitary matrices, 
\cite{MMRR5} for the real cases including the effects on the sign characteristic, and
\cite{FGJR} for the case of $H$-positive real matrices. In \cite{BMRR} higher rank perturbations of structured matrices were considered. Another type of structure was
treated in \cite{BR}, where nonnegative rank one perturbations of $M$-matrices
are discussed.  Finally, the quaternionic case was discussed in \cite{MR}.

In the present section we will treat the classes of complex $H$-selfadjoint and real $J$-Hamiltonian matrices, analysing the global definition of eigenvalues and the convergence for large values of the parameter. First let us recall the definitions. 

We say that an $n\times n$ matrix $A$ is 
\begin{itemize}
\item[(H)] \emph{$H$-selfadjoint} if $A\in\Comp^{n\times n}$,  $HA=A^*H$, where $H\in\Comp^{n\times n}$ is some Hermitian nonsingular matrix.
\item[(J)]  \emph{$J$-Hamiltonian} if  $A\in\Real^{n\times n}$, $JA=-A^\top J$, where $J\in\Real^{n\times n}$ is a  nonsingular real matrix  satisfying $J=-J^\top$.
\end{itemize}
 Note that rank one matrices in these classses are, respectively, of the form 
 \begin{itemize}
\item[(H)] $uu^*H$, for some $u\in\Comp^n\setminus\{0\}$,
\item[(J)] $uu^\top J$ for some $u\in\Real^n\setminus\{0\}$.
\end{itemize}
 Consequently,  the function $Q(\lambda)$ takes, respectively, the form
 \begin{itemize}
\item[(H)] $Q(\lambda)=u^*H(\lambda I_n-A)^{-1}u$,
\item[(J)] $Q(\lambda)=u^\top J(\lambda I_n-A)^{-1}u$.
\end{itemize}

It appears that in both these classes global analytic definition of eigenvalues is not a generic property, similarly to  Theorem~\ref{real-imposs} in the real unstructured case.  By inspection one sees that the proof remains almost the same, the key issue is that all polynomials involved are real on the real line and $x$ is a simple real zero of $q_0(\lambda)$.

\begin{theorem}\label{str-imposs} Assume one of the following
\begin{itemize}
\item[(H)]  $A\in\Comp^{n\times n}$ is $H$-selfadjoint with respect to some nonsingular Hermitian $H$, and $u\in\Comp^n\setminus\{0\}$,
\item[(J)] $A\in\Real^{n\times n}$ is $J$-Hamiltonian with respect to some nonsingular skew-symmetric $J$, and $u\in\Real^n\setminus\{0\}$. 
\end{itemize}
If for some $\tau_0>0$ an analytic definition of the eigenvalues  of $A+\tau_0 uv^*$ is not possible due to 
$$
Q(x)=1/\tau_0,\quad Q'(x)=0,\quad Q''(x)\neq 0
$$ 
for some $x\in\Real$, cf. Remark \ref{AvsQR},  then for all 
\begin{itemize}
\item[(H)] $\tilde A\in\Comp^{n\times n}$ being  $H$-selfadjoint, $\tilde u\in\Comp^n$
\item[(J)] $\tilde A\in\Real^{n\times n}$ being $J$-symmetric, $\tilde u\in\Real^n$ $($respectively$)$
\end{itemize} 
with  $\norm{\tilde u-u}$ and $\norm{\tilde A-A}$ sufficiently small the analytic definition of the eigenvalues is not possible due to  existence of $\tilde x\in\Real$, $\tilde \tau_0>0$, depending continuously on $\tilde A,\tilde u,\tilde v$ with 
$$
\widetilde Q(\tilde x)=1/{\tilde \tau_0},\quad \widetilde Q'(\tilde x)=0,\quad\widetilde  Q''(\tilde x)\neq 0,
$$
 where $\widetilde Q(z)$ corresponds to the perturbation of $\tilde A$ as described above the theorem.
\end{theorem}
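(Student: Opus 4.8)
\textbf{Proof plan for Theorem~\ref{str-imposs}.}

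The plan is to reduce both structured cases to the mechanism already exploited in the proof of Theorem~\ref{real-imposs}, namely that the condition $Q'(x)=0$, $Q''(x)\neq 0$ at a real point $x\notin\sigma(A)$ corresponds to a \emph{simple} real zero of the numerator polynomial $q_0(\lambda)=p'_{uv}(\lambda)m_A(\lambda)-p_{uv}(\lambda)m'_A(\lambda)$, and that simple real zeros of a real polynomial persist under small perturbations of the coefficients. The whole content of the theorem is therefore to check that the structure (H) or (J) forces $Q(\lambda)$, and hence $q_0(\lambda)$, to have \emph{real coefficients} (equivalently, to take real values on the real axis), so that the persistence-of-real-zeros argument applies; and that the perturbed matrices $\tilde A$ are required to stay inside the respective structured class, guaranteeing that $\widetilde Q$ inherits the same real-coefficient property.

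First I would establish the reality of $Q$ on the real line in each case. In case (H) we have $Q(\lambda)=u^*H(\lambda I_n-A)^{-1}u$; using $HA=A^*H$ one gets $H(\lambda I_n-A)^{-1}=(\bar\lambda I_n-A)^{-*}H$ for real $\lambda$, and together with $H=H^*$ this yields $\overline{Q(\lambda)}=Q(\bar\lambda)$, so $Q$ is real for real $\lambda$. In case (J) we have $Q(\lambda)=u^\top J(\lambda I_n-A)^{-1}u$ with $A,J,u$ real and $J=-J^\top$; here $Q$ is a scalar, so $Q=Q^\top=u^\top(\lambda I_n-A)^{-\top}J^\top u=-u^\top(\lambda I_n-A)^{-\top}Ju$, and the Hamiltonian relation $JA=-A^\top J$ converts $(\lambda I_n-A)^{-\top}J$ back into $J(\lambda I_n-A)^{-1}$ up to the sign, forcing $Q$ to be real (indeed the two expressions for $Q$ coincide). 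In both cases this shows $p_{uv}$, $m_A$, and therefore $q_0$, have real coefficients, so the real simple zero $x$ of $q_0$ is genuinely a simple real root of a real polynomial.

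Next I would note that the perturbed data are constrained to the same structured class: in (H), $\tilde A$ is again $H$-selfadjoint (so $\tilde u\tilde u^*H$ is the rank-one perturbation and $\widetilde Q(\lambda)=\tilde u^*H(\lambda I_n-\tilde A)^{-1}\tilde u$), and in (J), $\tilde A$ is again $J$-Hamiltonian. Hence the reality computation of the previous paragraph applies verbatim to $\tilde A,\tilde u$, and the numerator $\tilde q_0(\lambda)$ of $\widetilde Q'$ is again a \emph{real} polynomial whose coefficients depend continuously on the entries of $\tilde A,\tilde u$. Since $x$ is a simple zero of the real polynomial $q_0$ with $m_A(x)\neq0$, the implicit function theorem (or the standard continuity of simple roots) produces a nearby real zero $\tilde x$ of $\tilde q_0$ with $m_{\tilde A}(\tilde x)\neq0$, depending continuously on the perturbed data; setting $\tilde\tau_0=1/\widetilde Q(\tilde x)>0$ (real and nonzero by reality and continuity, possibly after noting the sign is preserved) and observing $\widetilde Q''(\tilde x)\neq0$ because $\tilde q_0'(\tilde x)\neq0$ for $\tilde x$ near $x$, we obtain exactly the obstruction claimed.

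The main obstacle, and the only place where the structured cases are not a literal copy of Theorem~\ref{real-imposs}, is the verification that the structure survives in the right form to keep $\widetilde Q$ real-valued on $\Real$: one must use the defining relations $H\tilde A=\tilde A^*H$ and $J\tilde A=-\tilde A^\top J$ together with the specific rank-one forms $\tilde u\tilde u^*H$, $\tilde u\tilde u^\top J$ of the admissible perturbations, rather than an arbitrary $\tilde u\tilde v^*$. In particular one should record that in these classes $v$ is determined by $u$ (namely $v=Hu$, respectively $v=Ju$ up to conjugation conventions), so the continuous dependence of $\tilde x,\tilde\tau_0$ is really dependence on $\tilde A,\tilde u$ alone. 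Once the reality of $\widetilde Q$ is in hand, the persistence argument is routine and the proof concludes as indicated.
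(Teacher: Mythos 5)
Your overall architecture is exactly the paper's: the paper proves this theorem by remarking that the argument of Theorem~\ref{real-imposs} carries over verbatim, the key point being that all polynomials involved are real on the real line and that $x$ is a simple real zero of $q_0(\lambda)=p_{uv}'(\lambda)m_A(\lambda)-p_{uv}(\lambda)m_A'(\lambda)$; your persistence argument via $\tilde q_0$, including $\widetilde Q''(\tilde x)\neq 0$ from $\tilde q_0'(\tilde x)\neq 0$ and $\tilde\tau_0=1/\widetilde Q(\tilde x)>0$ by continuity, is precisely that. Your case (H) reality computation is also correct: $HA=A^*H$ and $H=H^*$ give $\overline{Q(\lambda)}=Q(\bar\lambda)$.

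However, two of your sub-arguments are wrong as written, though both have one-line repairs. First, the case (J) argument is vacuous: writing $Q=Q^\top$ and invoking $JA=-A^\top J$ does not establish reality. Carried out correctly, the identity $(\lambda I_n-A^\top)^{-1}=J(\lambda I_n+A)^{-1}J^{-1}$ turns $Q=Q^\top$ into $Q(\lambda)=Q(-\lambda)$, i.e.\ it proves the \emph{evenness} of $Q$ (reflecting the symmetry of the Hamiltonian spectrum under $\lambda\mapsto-\lambda$), not its reality; and the parenthetical ``the two expressions for $Q$ coincide'' is just the tautology $Q=Q$. The reality you need in case (J) is immediate for a different reason: $A$, $J$, $u$ are all real, so $Q$ is a rational function with real coefficients; no structure is needed, and in fact case (J) is literally a corollary of Theorem~\ref{real-imposs} applied with $\tilde v=-J\tilde u$. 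Second, reality of $Q$ on $\Real$ does not by itself imply, as you assert, that $m_A$ (hence $p_{uv}$ and $q_0$) has real coefficients. In case (H) this requires the separate (standard) observation that $A^*=HAH^{-1}$ is similar to $A$, so $m_A=m_{A^*}=\overline{m_A}$ and $m_A$ is real; then $p_{uv}=Q\,m_A$ and $q_0=Q'm_A^2$ are real on $\Real$, and the same holds for the perturbed structured data. With these two repairs your proof is complete and coincides with the paper's intended argument.
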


\begin{remark}\rm We remark here, that Proposition~\ref{forcedcrossing}
holds for $J$-Hamiltonian matrices as well, and also
 for complex  $H$-selfadjoint matrices. 
\end{remark}
\cblack

We continue the section with corollaries from Theorem \ref{vAu}. While statement (ii) below is not surprising if one takes into account the symmetry of the spectrum of a $J$-Hamiltonian matrix with respect to both axes, statement (i) cannot be derived using symmetry principles only. 
\begin{corollary} 
\begin{itemize}
\item[(i)] Let $A\in\Real^{n\times n}$, consider the perturbation $A+\tau uu^\top J$, where $J$ is real, nonsingular and skew symmetric, and $u\in\Real^n\setminus\{0\}$ and $\tau\in\mathbb{R}$. Then  there are $($at least$)$ two eigenvalues of $A+t uu^\top J$ going to infinity as described by part {\rm (ii)} of Theorem~\ref{vAu}.
\item[(ii)]  If, additionally to {\rm (i)}, $A$ is also $J$-Hamiltonian 
the number of such eigenvalues is even, and 
\item[(iii)] In case $A$ is $J$-Hamiltonian and $u^\top JAu >0$ then there are two real eigenvalues converging to infinity as $\tau$ goes to $+\infty$, and two purely imaginary eigenvalues going to infinity as $\tau$ goes to $-\infty$. In case $u^\top JAu <0$ the situation is reversed.
\item[(iv)] In case $A$ is $J$-Hamiltonian and
$u^\top JAu=0$ there are at least four eigenvalues going to infinity as $\tau$ goes to $+\infty$ and as $\tau$ goes to $-\infty$. More precisely, let $\kappa$ be the first (necessarily odd) integer for which $u^\top JA^\kappa u\not= 0$. If $u^\top JA^\kappa u >0$ then for $\tau\to + \infty$ there are at least two real eigenvalues going to infinity, and two purely imaginary eigenvalues going to infinity. If $u^\top JA^\kappa u <0$ then for $\tau\to - \infty$ there are at least two real eigenvalues going to infinity, and two purely imaginary eigenvalues going to infinity. 
\end{itemize}
\end{corollary}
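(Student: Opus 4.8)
The plan is to derive everything from Theorem~\ref{vAu} applied to the structured perturbation $A+\tau uu^\top J$, where the relevant quantity is $v^*A^\kappa u$ with $v=J^\top u = -Ju$ (so that $uv^*=uu^\top J$). Concretely, $v^\top A^\kappa u = u^\top J A^\kappa u$ controls both the number of escaping eigenvalues and their asymptotic directions. First I would prove (i): for any real $A$ and real $u$, the function $\kappa\mapsto u^\top J A^\kappa u$ cannot have its first nonzero value at $\kappa=0$, because $u^\top J u=0$ by skew-symmetry of $J$. Hence the $\kappa$ of \eqref{vAue} satisfies $\kappa\geq 1$, and by part (iii) of Theorem~\ref{vAu} there are $\kappa+1\geq 2$ eigenvalues going to infinity. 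This is the part that genuinely uses the structure of the perturbation $uu^\top J$ rather than symmetry of the spectrum.

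For (ii), I would invoke the eigenvalue symmetry of $J$-Hamiltonian matrices: the spectrum of $A+\tau uu^\top J$ is symmetric with respect to the imaginary axis (eigenvalues come in pairs $\lambda,-\bar\lambda$), since $A+\tau uu^\top J$ is again $J$-Hamiltonian for real $\tau$. The escaping eigenvalues must therefore split into such pairs, forcing their number $\kappa+1$ to be even, i.e. $\kappa$ odd. Combining with (i) gives $\kappa\geq 1$ odd. For (iii), in the case $\kappa=1$ there are exactly two escaping eigenvalues. Writing $u^\top JAu=r_1 e^{\ii\theta_1}$, the assumption $u^\top JAu>0$ means $\theta_1=0$, and by the asymptotic formula in part (iii) of Theorem~\ref{vAu} the two eigenvalues behave like $\pm\sqrt{|\tau|\,r_1}$ for $\tau\to+\infty$: two real eigenvalues escaping to $\pm\infty$ along the real axis. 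For $\tau\to-\infty$ we write $\tau=|\tau|e^{\ii\pi}$, so the escaping eigenvalues acquire a factor $e^{\ii\pi/2}=\ii$, giving two purely imaginary eigenvalues. The case $u^\top JAu<0$ swaps the roles of $\theta_1=\pi$ and the sign of $\tau$, reversing the conclusion.

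For (iv), when $u^\top JAu=0$ we push to the first $\kappa$ with $u^\top JA^\kappa u\neq 0$; by the argument in (i)--(ii) this $\kappa$ is odd and (being $>1$) satisfies $\kappa\geq 3$, so $\kappa+1\geq 4$ eigenvalues escape. The asymptotic directions are the $(\kappa+1)$-th roots of $\tau\,u^\top JA^\kappa u$, equally spaced at angular increments $\tfrac{2\pi}{\kappa+1}$. Because $\kappa+1$ is divisible by $4$ precisely when $\kappa\equiv 3\pmod 4$ — and in general the Hamiltonian symmetry forces the root configuration to be invariant under $\lambda\mapsto -\bar\lambda$ — among these directions one finds at least one aligned with the real axis and at least one aligned with the imaginary axis, for the appropriate sign of $\tau$ determined by the sign of $u^\top JA^\kappa u$. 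This yields the claimed two real and two purely imaginary escaping eigenvalues.

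The main obstacle I expect is the bookkeeping in (iv): carefully checking which of the equally spaced $(\kappa+1)$-th roots actually land on the real and imaginary axes, and confirming that the Hamiltonian symmetry (spectral invariance under $\lambda\mapsto-\bar\lambda$) is compatible with and indeed guarantees the presence of axis-aligned directions for the correct sign of $\tau$. One must be careful that the asymptotic formula only gives the leading directions up to an $O(1)$ correction, so ``real'' and ``purely imaginary'' should be understood asymptotically; but since the full spectrum is exactly symmetric under $\lambda\mapsto-\bar\lambda$ for every $\tau\in\Real$, the escaping eigenvalues that are asymptotically axis-aligned can be taken to lie exactly on the axes by pairing each eigenvalue with its symmetric partner. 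Establishing this rigorous pairing, rather than relying solely on the leading-order asymptotics, is the delicate point.
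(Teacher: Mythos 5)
Your parts (i)--(iii) are essentially correct, but note that you reach (ii) and (iii) by a different route than the paper. The paper proves (ii) purely algebraically: if $A$ is $J$-Hamiltonian then $JA^k$ is skew-symmetric for every \emph{even} $k$, so $u^\top JA^ku=0$ for all even $k$ and the first nonvanishing index $\kappa$ is odd directly, with no appeal to spectral symmetry. For (iii)--(iv) the paper does not argue from the leading-order root directions at all; it uses the fact that $Q(\lambda)=u^\top J(\lambda I_n-A)^{-1}u$ is real-valued on both the real and the imaginary axis, so that the asymptotics $Q(\lambda)\sim u^\top JA^\kappa u/\lambda^{\kappa+1}$ together with the intermediate value theorem produce solutions of $Q(\lambda)=1/\tau$ lying \emph{exactly} on the axes, with no $O(1)$ correction to remove. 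Your symmetry-pairing device for upgrading ``asymptotically real'' to ``exactly real'' does work for (iii), provided you repair the small gap in (ii): an escaping eigenvalue fixed by $\lambda\mapsto-\bar\lambda$ is purely imaginary, so to get an even count you must also invoke conjugation symmetry, which pairs such an eigenvalue with its distinct conjugate (distinct because escaping eigenvalues are nonzero).

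The genuine gap is in (iv), at exactly the point you flagged and then waved away. Hamiltonian symmetry does \emph{not} force an axis-aligned direction: the set of $(\kappa+1)$-th roots of a nonzero real number is invariant under $\lambda\mapsto\bar\lambda$ and $\lambda\mapsto-\bar\lambda$ whether or not it meets the imaginary axis. If $\kappa\equiv 1\pmod 4$, say $\kappa=5$, then for the sign of $\tau$ making $\tau\, u^\top JA^\kappa u>0$ the six limiting directions are $e^{2\pi \ii j/6}$, which contain $\pm1$ but not $\pm\ii$, while for the opposite sign they contain $\pm\ii$ but not $\pm1$; both configurations are perfectly Hamiltonian-symmetric. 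So for a single sign of $\tau$ one can never obtain both real and purely imaginary escaping eigenvalues unless $4\mid\kappa+1$. Moreover this is not a fixable presentation issue, because $\kappa=5$ is realizable: take $A=J_6(0)$, let $J$ be the $6\times 6$ antidiagonal matrix with entries $1,-1,1,-1,1,-1$ starting from the top-right corner, and $u=(0,0,1,2,2,1)^\top$; then $A$ is $J$-Hamiltonian, $u^\top JAu=u^\top JA^3u=0$, $u^\top JA^5u=-1$, and $Q(\lambda)=-1/\lambda^6$, so the perturbed characteristic polynomial is $\lambda^6+\tau$. With $u^\top JA^5u<0$, claim (iv) asserts two real \emph{and} two purely imaginary eigenvalues escaping as $\tau\to-\infty$, but the escaping eigenvalues there are the sixth roots of $|\tau|$: two real ones and none purely imaginary. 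So your argument for (iv) cannot be completed as written (and indeed the corollary's claim (iv), like the paper's one-line justification of it, is only valid when $\kappa\equiv 3\pmod 4$; the correct general statement, which both your method and the paper's $Q$-reality method actually yield, is that two real eigenvalues escape for the sign of $\tau$ with $\tau\, u^\top JA^\kappa u>0$, and two purely imaginary ones escape for the sign of $\tau$ with $(-1)^{(\kappa+1)/2}\tau\, u^\top JA^\kappa u>0$).
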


\begin{proof}
Part (i) follows from Theorem \ref{vAu} and the fact that for any vector $u$ we have $u^\top Ju=0$ by the skew-symmetry of $J$. 

Part (ii) follows from the same reasoning taking into account that for any even $k$ the matrix $JA^k$ is skew-symmetric, from which one has $u^\top JA^ku=0$ for even $k$.  

Parts (iii) and (iv) follow from Theorem \ref{vAu}, part (iii), using the fact that $Q(\lambda)$ is real on both real and imaginary axis. 
\end{proof}

\begin{example}\rm
Consider the following matrices $J$ and $A$ and vector $u$:
$$
J=\begin{bmatrix} 0 & 0 & 0 & 1 \\ 0 & 0 & -1 & 0 \\ 0 & 1 & 0 & 0 \\ -1 & 0 & 0 & 0\end{bmatrix},
\quad A=\begin{bmatrix} 1 & 1 & 0 & 0 \\ 0 & 1 & 0 & 0 \\ 0 & 0 & -1 & 1 \\ 0 & 0 & 0 & -1\end{bmatrix}, \quad u=\begin{bmatrix} 0 \\ 1 \\ 1 \\ 1 \end{bmatrix}.
$$
Then, one checks easily that $A$ is $J$-Hamiltonian, and that $u^\top JAu=0$, while $u^\top JA^3\cblue u\cblack=-4\not=0$. The polynomial $p_{uv}(\lambda)$ for $v=-Ju$ is constant, equal to $-4$. Hence all four eigenvalues of $A+t uu^\top J$ are going to infinity, as is shown in the following figure. Note also that the rate of convergence to infinity in this example should be as the fourth root of $t$, which is confirmed by the graph (the fourth root of $125000$ is about $19$).
\begin{figure}
\includegraphics[height=4cm]{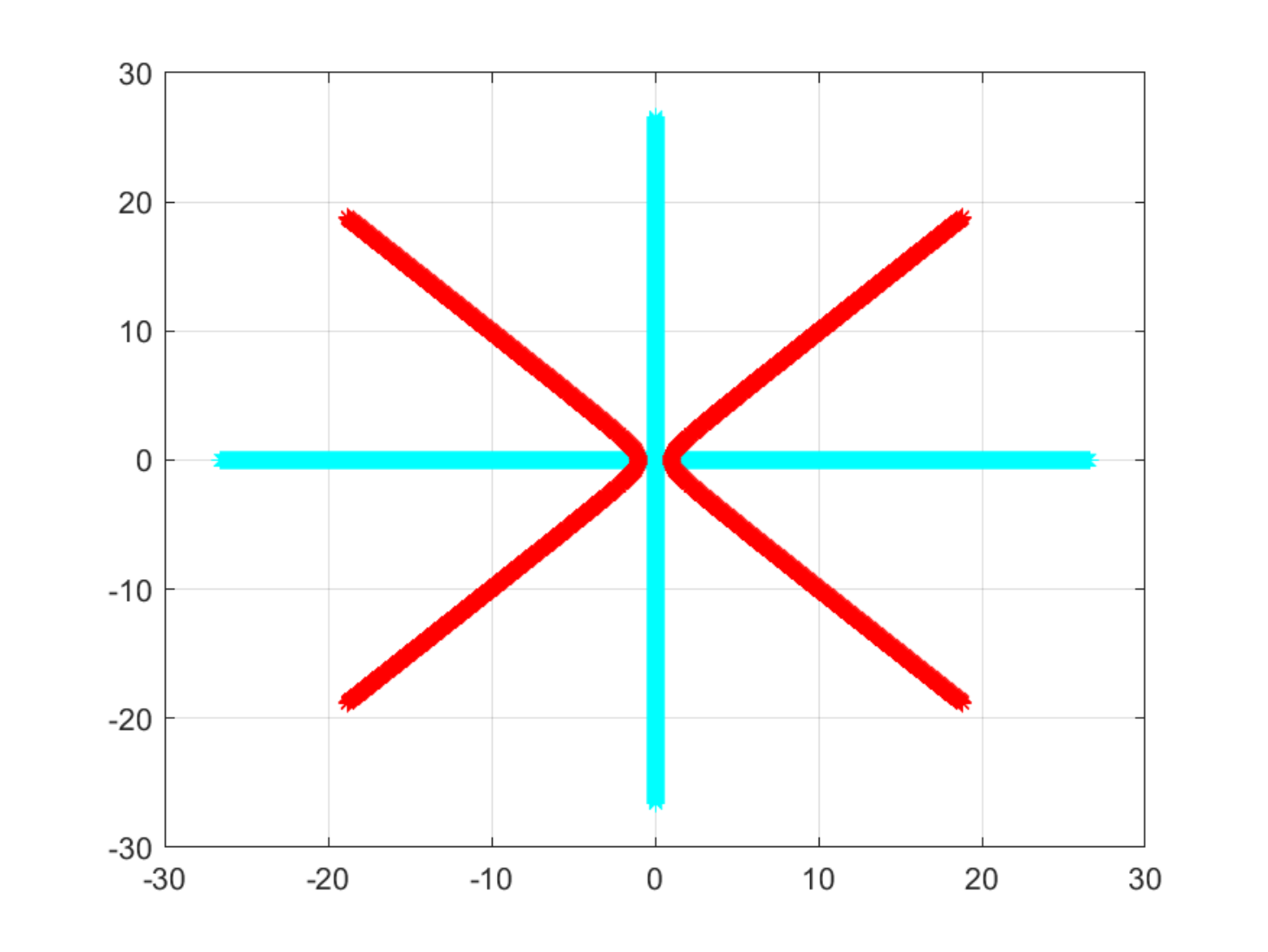}
\caption{Four eigenvalues going to infinity. The plot shows the eigenvalues of $A+tuu^\top J$ for $0\leq t\leq 125000$ in red, and the eigenvalues of $A-tuu^\top J$ for the same range of $t$ in cyan.}
\end{figure}
\end{example}

%To finish the section we present one proposition showing when one of the eigenvalues converging to infinity is real.
%
%\begin{proposition}\label{evReal}
%Let $A\in\Comp^{n\times n}$, $u,v\in\Comp^n$ and $k\in\{0,1,2,3\}$ be such that the function $Q(u,v;\lambda)$ is positive (negative), except a finite number of poles,  for  $\lambda\in \e^{k\ii/2} (0,+\infty)$. Then 
%for sufficiently large $\tau>0$ ($\tau<0$)  there exists an eigenvalue $\lambda_j(\tau)\in  \e^{k\ii/2} (0,+\infty)$ of $A+\tau uv^*$ with $|\lambda_j(\tau)|\to \infty$ with $\tau\to+\infty$ ($\tau\to -\infty$, respectively).
%\end{proposition}
%
%
%\begin{proof}
%For the proof it is enough to observe that $Q(u,v;\lambda)$ converges to zero as $|\lambda|\to\infty$.
%\end{proof}

\section{Nonnegative matrices}\label{s:non}

We will apply Theorem \ref{vAu} to the % ensemble 
setting
of nonnegative matrices.
Recall that a nonnegative matrix $A$ is called \textit{irreducible} if there is no permutation matrix $P$ such that $P^\top A P$ is of a block form 
$$
P^\top A P = \begin{bmatrix} X & Y \\ 0 & Z \end{bmatrix}
$$
with $X$ and $Z$ being nontrivial square matrices. By the  \textit{graph associated with the matrix }$A=[a_{ij}]_{ij=1}^n$ we understand the directed graph with vertices $1,\dots ,n$ and with the set of edges consisting of only those pairs $(i,j)$ for which $a_{ij}>0$. By a \textit{cycle} we understand a directed path from the vertex $i$ to itself.

\begin{theorem}
Let $A=[a_{ij}]_{ij=1}^n\in\Real^{n\times n}$ be a nonnegative, irreducible matrix. Let also $l$ denote the length of the shortest cycle in the graph of the matrix $A+e_{i_0}e_{j_0}^\top$ containing the $(i_0,j_0)$ edge. Then the matrix 
$$
A+\tau e_{i_0}e_{j_0}^\top,\quad \tau>0
$$
has precisely $l$ eigenvalues converging to infinity with $\tau\to+\infty$

\end{theorem}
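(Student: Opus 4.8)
The plan is to read the statement as a direct application of Theorem~\ref{vAu} with $u=e_{i_0}$ and $v=e_{j_0}$, so that $\tau uv^\top=\tau e_{i_0}e_{j_0}^\top$, and then to translate the algebraic quantity $\kappa$ of that theorem into the combinatorial quantity $l$ of the statement. By Theorem~\ref{vAu}(iii) the number of eigenvalues of $A+\tau uv^\top$ escaping to infinity equals $\kappa+1$, where $\kappa$ is the least integer with $v^* A^{\kappa} u\neq 0$ while $v^* A^{j}u=0$ for $j<\kappa$; by parts (i)--(ii) the remaining movable eigenvalues converge to the finite roots of $p_{uv}(\lambda)$, so \emph{precisely} $\kappa+1$ eigenvalues go to infinity. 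It therefore suffices to show $\kappa+1=l$.

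First I would check that such a $\kappa$ exists and satisfies $\kappa\le \deg m_A-1$, so that hypothesis \eqref{vAue} is genuinely met: were $v^* A^{j}u$ to vanish for all $j\le \deg m_A-1$, the minimal polynomial relation would force it to vanish for every $j\ge 0$. Irreducibility of $A$ rules this out, because it guarantees a directed walk from $j_0$ to $i_0$ in the graph of $A$, which (by the next step) produces some $j$ with $v^* A^{j}u>0$.

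The key step is the combinatorial reading of the numbers $v^* A^{j}u$. Since $u=e_{i_0}$ and $v=e_{j_0}$ are real, $v^* A^{j}u=e_{j_0}^\top A^{j}e_{i_0}=(A^{j})_{j_0 i_0}$, and expanding the matrix product gives $(A^{j})_{j_0 i_0}=\sum a_{j_0 p_1}a_{p_1 p_2}\cdots a_{p_{j-1} i_0}$, a sum over all directed walks of length $j$ from $j_0$ to $i_0$. Because $A$ is nonnegative, every summand is nonnegative, so $(A^{j})_{j_0 i_0}>0$ if and only if at least one such walk exists. Hence $\kappa=\min\{j:\text{there is a walk of length }j\text{ from }j_0\text{ to }i_0\}$, which is exactly the length of the shortest directed path from $j_0$ to $i_0$ in the graph of $A$, since a shortest walk repeats no vertex.

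Finally I would relate $\kappa$ to the shortest cycle through the $(i_0,j_0)$ edge. In the graph of $A+e_{i_0}e_{j_0}^\top$ the $(i_0,j_0)$ entry is positive, so this graph is the graph of $A$ augmented by the edge from $i_0$ to $j_0$. A shortest cycle using that edge has the form $i_0\to j_0\to\cdots\to i_0$: it traverses the edge once (length $1$) and then returns from $j_0$ to $i_0$ along a shortest path. That return portion cannot reuse the edge $(i_0,j_0)$, which \emph{leaves} $i_0$ whereas $i_0$ is the terminal vertex, so it lies in the graph of $A$ and has length $\kappa$. Thus $l=1+\kappa$, yielding exactly $\kappa+1=l$ eigenvalues converging to infinity. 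I do not expect a genuine obstacle here; the only points needing care are the directionality of the added edge and the verification just given that the return portion of a shortest cycle uses only edges of $A$, together with the harmless clash of the symbol $l$ between the cycle length in the statement and the minimal-polynomial degree $\deg m_A$ in Theorem~\ref{vAu}.
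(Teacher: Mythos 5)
Your proof is correct and follows essentially the same route as the paper: apply Theorem~\ref{vAu} with $u=e_{i_0}$, $v=e_{j_0}$, read $v^\top A^k u=(A^k)_{j_0 i_0}$ as detecting walks of length $k$ from $j_0$ to $i_0$ (positive iff such a walk exists, by nonnegativity), and conclude that $\kappa+1=l$ eigenvalues escape to infinity. You are in fact more careful than the paper's two-line proof, which states $(A^k)_{j_0i_0}=0$ iff $k<l$ (an off-by-one slip, since the first nonzero power is $k=l-1$, i.e.\ $\kappa=l-1$, exactly as you derive) and leaves implicit both the existence of $\kappa\le\deg m_A-1$ via irreducibility and the graph-theoretic identity $l=\kappa+1$, details you supply correctly.
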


\begin{proof}
Note that 
$$
e_{j_0}^\top A^k e_{i_0} = (A^k)_{j_0i_0}=0
$$
if  and only if  $k<l$, as $l$ is the length of the smallest cycle going through the $(i_0,j_0)$ edge.
By Theorem \ref{vAu} the matrix $A+\tau uv^\top$ has precisely $l$ eigenvalues converging to infinity.
\end{proof}

 Note that the number $l$ of eigenvalues converging to infinity may be  greater than the number of eigenvalues of $A$ on the spectral circle, i.e, the imprimitivity index. However, by the theory of nonnegative matrices $l$, as the length of the (shortest) cycle, is always a multiple of  the imprimitivity index, see, e.g., Theorem 1.6. of \cite{DJZ}.

%{\bf We need either an explanation or a reference here.Book of Meyer? }

\begin{example}\rm
Consider the matrix $A=\begin{bmatrix} 1 & 1 \\ 1 & 1 \end{bmatrix}$ and
the vectors $u=\begin{bmatrix} 1 \\ 0 \end{bmatrix}$ and $v=
\begin{bmatrix} 0 \\ 1 \end{bmatrix}$. Then $B(\tau) = \begin{bmatrix}
1 & 1+\tau \\ 1 & 1 \end{bmatrix}$. Then $v^\top u=0$, while $v^\top Au\not= 0$.
So both eigenvalues of $B(\tau)$ will go to infinity. For $\tau \geq 0$ the matrix
$B(\tau)$ is an entrywise positive matrix, so one of the eigenvalues will be
the spectral radius. By Theorem \ref{vAu}, both eigenvalues go to zero at the same rate,
but as the eigenvalues are $\sqrt{1+\tau} \pm 1$ their moduli are not equal.
\end{example}

\section*{ Acknowledgement} The authors have made use of the work of Brian van de Camp, who did his master thesis under their supervision. In particular, the result of Theorem \ref{vAu}, part (v) %{\color{blue} ADAPT}
 was first presented in the thesis \cite{vdCamp} for the generic case only.
 
 \smallskip
 
\paragraph{\bf Declarations:} $ $\smallskip

\paragraph{\bf Conflict of interest} The authors declare that they have no conflict of interest.\smallskip

\paragraph{\bf Funding}
Not applicable. \smallskip

\paragraph{\bf Availability of data and material} Data sharing is not applicable to this article as no datasets were generated or analysed during the current study.\smallskip

\paragraph{\bf Code availability} Not applicable.

\end{document}